\newcommand{\V}{\mathrm{V}}
\newcommand{\M}{\mathrm{M}}
\newcommand{\GL}{\mathrm{GL}}
\newcommand{\SL}{\mathrm{SL}}
\newcommand{\R}{\mathbb{R}}
\newcommand{\Q}{\mathbb{Q}}
\newcommand{\Z}{\mathbb{Z}}
\newcommand{\N}{\mathbb{N}}
\newcommand{\C}{\mathbb{C}}
\def \Aut {\mathrm{Aut}}
\newcommand\numberthis{\addtocounter{equation}{1}\tag{\theequation}}
\def\mcG{\mathcal G}
\newtheorem{theorem}{Theorem}[section]
\newtheorem{proposition}[theorem]{Proposition}
\newtheorem{lemma}[theorem]{Lemma}
\theoremstyle{definition}
\newtheorem{defn}[theorem]{Definition}
\newtheorem{rmk}[theorem]{Remark}
\author{M. M. Radhika}
\address{Kerala School of Mathematics, Kerala, India}
\email{mmr@ksom.res.in}
\author{Sandip Singh}
\address{Department of Mathematics, Indian Institute of Technology Bombay, Mumbai, India}
\email[]{sandip@math.iitb.ac.in}
\subjclass[2010]{Primary 22E40; Secondary 20G05, 20G30} 
\keywords{central simple algebras, arithmetic groups, uniform lattices, representations of algebraic groups}
\begin{document}

\title{Lattices in $\R^n\rtimes\SL_2(\R)$}

\vskip 5mm

\begin{abstract}
We determine the existence of cocompact lattices in groups of the form $\V\rtimes\SL_2(\R)$, where $\V$ is a finite dimensional real representation of $\SL_2(\R)$. It turns out that the answer depends on the parity of $\dim(\V)$ when the representation is irreducible.  
\end{abstract}

\maketitle

\section{Introduction}\label{s1}

\noindent 
A lattice in a Lie group $G$ is a discrete subgroup $\Gamma$ which has a finite covolume with respect to the Haar measure on $G$. Lattices in semisimple Lie groups have been extensively studied. It is known that (linear) semisimple Lie groups contain both cocompact and non-cocompact lattices (cf. \cite{Borel}; \cite{B-H}; \cite{Rag-2}). While non-arithmetic lattices are known to exist in certain orthogonal and unitary groups (see \cite[Appendix C]{GAM}), for instance, in $\mathrm{SO}(1,n)$ (\cite{VSM, EBV} for $n\leq 5$, \cite{GPS}) and $\mathrm{SU}(1,n)$ (\cite{GDM} for $n=2$, \cite{DM} for $n=3$), Margulis arithmeticity theorem asserts that, in most cases, lattices in classical groups are arithmetic subgroups and are obtained by taking the integral points. Arithmeticity allows one to use algebraic and number theoretic techniques to study these lattices, whereas non arithmetic lattices are often studied using geometric techniques. 
 
\noindent
In solvable groups, lattices (if they exist) are cocompact  \cite[Theorem 3.1]{Rag-1}. Both cocompact and non cocompact arithmetic lattices are known to exist in non-compact semisimple groups. 
It is a natural question to ask whether a semidirect product of a solvable group and a semisimple group contains a cocompact lattice or not; whether the semidirect product contains an arithmetic lattice or not; and if such lattices exist, what they are. In this article we will be investigating these questions for the simplest group of this type, $\R^n\rtimes\SL_2(\R)$, where $\R^n$ is an algebraic representation space for $\SL_2(\R)$. 

\noindent
Let $\Gamma$ be a lattice in $\R^n\rtimes\SL_2(\R)$, write $\Gamma_1=\Gamma\cap{\R^n}$ and let $\Gamma_2$ be the projection of $\Gamma$ on $\SL_2(\R)$. It follows from \cite[Corollary 8.28]{Rag-1} (see Appendix A) that $\Gamma_1$ is a lattice in $\R^n$ and $\Gamma_2$ is a lattice in $\SL_2(\R)$.
We show that the lattice $\Gamma_2$ is necessarily an arithmetic lattice in $\SL_2(\R)$ (see Proposition \ref{arithmetic-1}). It is a little surprising that for the irreducible action of $\SL_2(\R)$ the existence of a cocompact lattice in $\R^n\rtimes\SL_2(\R)$ depends on the parity of $n$ (see Theorem \ref{maintheorem}). 
When the action of $\SL_2(\R)$ is irreducible, one obvious way of exhibiting lattices in $\R^n\rtimes\SL_2(\R)$ is to take finite index subgroups of $\Z^n\rtimes\SL_2(\Z)$. When $n$ is even, it turns out that up to conjugates and commensurability these are the only lattices. In particular, for an even $n$, $\R^n\rtimes\SL_2(\R)$ does not contain cocompact lattices. However, when $n$ is odd, $\R^n\rtimes\SL_2(\R)$ does possess cocompact lattices.

\noindent
We now state the main results of this paper. 

\begin{theorem}\label{maintheorem}
For $n\geq{2}$, and the irreducible action of $\SL_2(\R)$ on $\R^n\ (\simeq{\rm{Sym}}^{n-1}(\R^2))$, the following are true:
\begin{itemize}
\item[(1)]  $\R^n\rtimes\SL_2(\R)$ contains a cocompact lattice if and only if $n$ is odd. Moreover, the cocompact lattices are of the form $\Z^n\rtimes\SL_1(\mathcal O_D)$ upto conjugates and commensurability, where $D$ is a quaternion division algebra over a totally real number field $k$.
\item[(2)] If $n$ is even, all lattices are isomorphic to $\Z^n\rtimes\SL_2(\Z)$ up to conjugates and commensurability.
\end{itemize}
\end{theorem}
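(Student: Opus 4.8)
\noindent \emph{Proof strategy.}\quad The first point is that cocompactness of $\Gamma$ is detected by $\Gamma_2$ alone: the projection $\R^n\rtimes\SL_2(\R)\to\SL_2(\R)$ induces a fibre bundle $(\R^n\rtimes\SL_2(\R))/\Gamma\to\SL_2(\R)/\Gamma_2$ with fibre $\R^n/\Gamma_1$, and since $\Gamma_1$ is a lattice in $\R^n$ (\cite[Corollary 8.28]{Rag-1}, as recalled above) this fibre is compact; hence $\Gamma$ is cocompact iff $\Gamma_2$ is a cocompact lattice in $\SL_2(\R)$. The second point produces a $\Q$-structure. Because $\Gamma_1=\Gamma\cap\R^n$ is normal in $\Gamma$, the conjugation action of $\Gamma_2=\Gamma/\Gamma_1$ on $\Gamma_1$ — which is just the restriction of the representation $\rho=\mathrm{Sym}^{n-1}$ of $\SL_2(\R)$ — preserves the lattice $\Gamma_1\subset\R^n$, so a choice of $\Z$-basis of $\Gamma_1$ identifies $\rho(\Gamma_2)$ with a subgroup of $\GL_n(\Z)$. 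As $\Gamma_2$ is a lattice in $\SL_2(\R)$ it is Zariski dense in $\SL_2$, so the Zariski closure of $\rho(\Gamma_2)$ is $H:=\rho(\SL_2)$; being the Zariski closure of a subgroup of $\GL_n(\Z)$, $H$ is defined over $\Q$, and $\rho(\Gamma_2)$ is an arithmetic subgroup of $H$ — indeed a lattice in $H(\R)$, as is $H(\Z)$, so the two are commensurable (in particular $\Gamma_2$ is arithmetic, cf.\ Proposition \ref{arithmetic-1}). Thus the whole question reduces to two algebraic ones: which $\Q$-forms $H$ of the algebraic group $\rho(\SL_2)$ can occur, equivalently for which quaternion algebras does $\mathrm{Sym}^{n-1}$ admit a model over $\Q$; and for which of those is $H(\Z)$ cocompact.

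Here the parity of $n$ is decisive. If $n$ is \emph{even}, $\rho$ is faithful, so $H$ is a $\Q$-form of $\SL_2$, i.e.\ $H=\SL_1(B)$ for a quaternion algebra $B$ over $\Q$, and the inclusion $H\hookrightarrow\GL_n$ is a $\Q$-model of $\mathrm{Sym}^{n-1}$ with $n-1$ odd. I would prove the key lemma that this forces $B$ to be split: given a $\Q$-model $W$ of $\mathrm{Sym}^{2\ell+1}$, tensoring with the adjoint representation $\mathrm{ad}=B^0$ (a $\Q$-model of $\mathrm{Sym}^2$) and applying Clebsch--Gordan exhibits $\mathrm{Sym}^{2\ell-1}$ as one of three distinct, hence Galois-stable, hence rational, isotypic summands of $W\otimes_\Q\mathrm{ad}$; descending step by step to $\ell=0$ yields a $\Q$-model of the standard $2$-dimensional representation of $\SL_1(B)$, which forces a $\Q$-isomorphism $\SL_1(B)\cong\SL_2$, so $B\cong M_2(\Q)$. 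Hence for even $n$ the only possibility is $H=\SL_2$ split, $\Gamma_2$ is commensurable up to conjugacy with $\SL_2(\Z)$ (in particular never cocompact), and $\Gamma_1$ lies, up to commensurability, in the standard $\Q$-structure $\mathrm{Sym}^{n-1}(\Q^2)$; this gives part (2). If $n$ is \emph{odd}, $\rho$ factors through $\mathrm{PGL}_2$, so $H=\mathrm{PGL}_1(B)$ for a quaternion algebra $B$ over $\Q$, and now there is no obstruction at all: $\mathrm{Sym}^{n-1}$ is realised over $\Q$ as the space of harmonic polynomials of degree $\tfrac{n-1}{2}$ on $B^0$ for the reduced-norm form, manifestly a rational representation of $\mathrm{PGL}_1(B)$. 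Taking $B$ to be an \emph{indefinite quaternion division algebra over $\Q$} (which exists) makes $H(\Z)$ a cocompact lattice in $H(\R)=\mathrm{PGL}_2(\R)$.

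For the existence assertion in part (1) I would make this explicit: with such a $B$ and a maximal order $\mathcal O\subset B$, the group $\SL_1(\mathcal O)$ is a cocompact lattice in $\SL_1(B\otimes\R)=\SL_2(\R)$; it acts, through $\SL_1(\mathcal O)\to\mathrm{PGL}_1(\mathcal O)$, on the rational representation $W$ above and stabilises some lattice $L\subset W(\R)\cong\R^n$, so $L\rtimes\SL_1(\mathcal O)$ is a cocompact lattice in $\R^n\rtimes\SL_2(\R)$ of the asserted shape $\Z^n\rtimes\SL_1(\mathcal O_D)$, with $D=B$ a quaternion division algebra over the totally real field $\Q$. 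For the converse classification — that every such cocompact lattice, and when $n$ is even every lattice, is of the stated form up to conjugates and commensurability — one runs the first two paragraphs in reverse: the $\Q$-group $H$ and, via the commensurability theory of arithmetic subgroups, the commensurability class of $\Gamma_2$ and its quaternion algebra are pinned down; since $\mathrm{Sym}^{n-1}$ is absolutely irreducible, once a $\Q$-model exists it is unique up to isomorphism, so $\Gamma_1$ is, after a conjugation and up to commensurability, a lattice in the standard $\Q$-structure and hence commensurable with $\Z^n$; and the extension $1\to\Gamma_1\to\Gamma\to\Gamma_2\to1$ becomes split on a finite-index subgroup of $\Gamma_2$ because $\R^n$ has no nonzero $\Gamma_2$-invariant vector, so $H^2(\Gamma_2;\R^n)=0$ and the extension class in $H^2(\Gamma_2;\Gamma_1)$ is torsion.

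The principal obstacle is the descent lemma of the second paragraph — the clean statement and proof that an anisotropic $\Q$-form of $\SL_2$ carries no rational model of $\mathrm{Sym}^{n-1}$ for even $n$, which is precisely the mechanism that separates the two parities — together with the care needed to make ``up to conjugates and commensurability'' rigorous: passing between $\SL_2$ and $\mathrm{PGL}_2$ and between their arithmetic subgroups, replacing $\Gamma$ by a finite-index subgroup on which the extension splits, and matching an arbitrary $\Gamma_2$-stable lattice in $\mathrm{Sym}^{n-1}(\R^2)$ with the standard one.
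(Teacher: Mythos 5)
Your proposal is correct and its core mechanism is the same as the paper's: the stabilized lattice $\Gamma_1$ produces a $\Q$-form $H$ of $\rho(\SL_2)$ (Proposition \ref{arithmetic-1} and Remark \ref{existenceofQ-forminSL_2}), and everything reduces to deciding for which quaternion algebras over $\Q$ the representation $\mathrm{Sym}^{n-1}$ is defined over $\Q$; you settle this, exactly as in Proposition \ref{def|k}, by Clebsch--Gordan descent against the rational adjoint representation $B^0$, terminating at the standard two-dimensional representation, which is rational only for split $B$ (Lemma \ref{action on k^2}). Two points where your execution genuinely differs are worth recording. First, for odd $n$ you realise the $\Q$-model of $\mathrm{Sym}^{n-1}$ in one stroke as harmonic polynomials of degree $(n-1)/2$ on $(B^0,\mathrm{Nrd})$, where the paper builds it inductively from $\mathrm{Sym}^2$ via the Clebsch--Gordan formula \eqref{CGFormula} and Proposition \ref{direct summand def over k}; the constructions are equivalent, yours being more explicit. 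Second, for the splitting of $1\to\Gamma_1\to\Gamma\to\Gamma_2\to 1$ the paper passes to a free finite-index subgroup of $\Gamma_2$, which exists only in the non-cocompact (even $n$) case, and merely asserts that the same arguments give the structure of cocompact lattices; your $H^2$-vanishing argument is the one that actually covers the cocompact case, though as written it needs one more word of justification: $(\R^n)^{\Gamma_2}=0$ does not by itself force $H^2(\Gamma_2;\R^n)=0$ --- you should pass to a torsion-free finite-index subgroup, which is a closed surface group, use Poincar\'e duality to identify $H^2(\Gamma_2;\R^n)$ with the coinvariants $(\R^n)_{\Gamma_2}=0$, and then note $H^2(\Gamma_2;\Z^n)\otimes\Q\cong H^2(\Gamma_2;\Q^n)=0$ to conclude that the extension class is torsion and dies on a further finite-index subgroup.
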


\noindent
Part (1) of the above theorem is proved in Theorem \ref{ccpt=n odd}.  Although part (1) seems to be a purely Lie theoretic statement, the proof depends on considerations of number theoretic nature. More importantly, the `only if' assertion is achieved using Proposition \ref{ccpt=def|Q} which gives the necessary condition for the existence of a cocompact lattice in terms of the field of definition of the algebraic representation of the associated algebraic group. Part (2) of the above theorem is dealt with in Theorem \ref{lattices in R2.SL(2,R)}. We note here that to exhibit cocompact lattices, we use the classification results for the arithmetic subgroups of $\SL_2(\R)$ which is described, for instance, in \cite[Section 18.5]{DWM}.

\noindent
For a non-trivial and non-irreducible action of  $\SL_2(\R)$ on $\R^n$, we prove the following theorem which is a consequence of Theorem \ref{maintheorem}:

\begin{theorem}
\label{secondmaintheorem}
The group $\R^n\rtimes\SL_2(\R)$ contains a cocompact lattice if and only if the multiplicity of the even dimensional representations occurring in $\R^n$ is even.
\end{theorem}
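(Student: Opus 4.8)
The plan is to decompose $\V=\R^n$ into $\SL_2(\R)$-irreducibles and reduce the statement to a Brauer-obstruction computation over $\Q$, in the same spirit as Theorem \ref{maintheorem}. Write $\V\simeq\bigoplus_{c\geq 0}\mathrm{Sym}^{c}(\R^2)^{\oplus m_c}$; the even-dimensional irreducibles are exactly the $\mathrm{Sym}^{c}$ with $c$ odd, so the hypothesis is that $m_c$ is even for every odd $c$. As recalled after Proposition \ref{arithmetic-1}, for any lattice $\Gamma\subset\V\rtimes\SL_2(\R)$ the group $\Gamma_1=\Gamma\cap\V$ is a lattice in $\V$ and the projection $\Gamma_2$ is a lattice in $\SL_2(\R)$; since $\V/\Gamma_1$ is compact, $\Gamma$ is cocompact if and only if $\Gamma_2$ is cocompact in $\SL_2(\R)$. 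Hence the existence of a cocompact lattice in $\V\rtimes\SL_2(\R)$ is equivalent to the existence of a cocompact lattice $\Gamma_2\subset\SL_2(\R)$ admitting a $\Gamma_2$-invariant lattice in $\V$.

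For the "if" direction I would fix once and for all a quaternion division algebra $D$ over $\Q$ split at the archimedean place, so that $\SL_1(\mathcal O_D)$ is a cocompact lattice in $\SL_1(D\otimes\R)=\SL_2(\R)$. I would then realize $\V$, with its $\SL_2(\R)$-action, as the real points of a $\Q$-rational representation $U$ of the $\Q$-group $\mathbf G=\SL_1(D)$, assembled summand by summand: each odd-$c$ summand $\mathrm{Sym}^{2a}$ is realized over $\Q$ as the highest-weight-$2a$ constituent of $\mathrm{Sym}^{a}(D^{0})$ (the construction already used in Theorem \ref{ccpt=n odd}), while a pair $\mathrm{Sym}^{c}\oplus\mathrm{Sym}^{c}$ with $c$ odd is realized over $\Q$ as the $\mathrm{Sym}^{c}$-isotypic constituent of $D\otimes_{\Q}\mathrm{Sym}^{c-1}$, where $D$ denotes the left regular representation and $\mathrm{Sym}^{c-1}$ is taken in its $\Q$-form (legitimate since $c-1$ is even). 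The hypothesis that each even-dimensional irreducible has even multiplicity is exactly what allows the even part of $\V$ to be split into such pairs; summing the pieces yields a $\Q$-representation $U$ of $\mathbf G$ with $U\otimes_\Q\R\cong\V$. Choosing a $\mathbf G(\Z)$-stable lattice $\Lambda\subset U$, the group $\Lambda\rtimes\SL_1(\mathcal O_D)$ is a cocompact lattice in $\V\rtimes\SL_2(\R)$.

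For the "only if" direction, suppose $\Gamma\subset\V\rtimes\SL_2(\R)$ is cocompact; by the reduction above $\Gamma_2$ is a cocompact lattice in $\SL_2(\R)$, and it is arithmetic by Proposition \ref{arithmetic-1}. Fix an odd $c$ with $m_c\geq 1$ and let $\V_c\subset\V$ be the $\mathrm{Sym}^{c}$-isotypic component. Since $\Gamma_2$ is Zariski dense in $\SL_2(\R)$ and the absolutely irreducible representations of $\SL_2$ are defined over $\Q$ with Galois-invariant isomorphism classes, the $\Q$-structure on $\V$ furnished by $\Gamma\cap\V$ restricts to a $\Q$-structure on $\V_c$ on which $\Gamma_2$ acts; thus $\Gamma_2$ preserves a lattice in $\V_c\cong\mathrm{Sym}^c(\R^2)^{\oplus m_c}$. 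The $\Q$-Zariski closure $\mathbf{G}'$ of $\Gamma_2$ in $\GL(\V_c)$ is then a $\Q$-form of $\SL_2$ of which $\Gamma_2$ is an arithmetic subgroup, so by Godement's compactness criterion the cocompactness of $\Gamma_2$ forces $\mathbf{G}'$ to be $\Q$-anisotropic, i.e.\ $\mathbf{G}'=\SL_1(D')$ for a quaternion division algebra $D'$ over $\Q$. Consequently $\mathrm{Sym}^{c}(\R^2)^{\oplus m_c}$ is the real form of a $\Q$-rational representation of $\SL_1(D')$; since $c$ is odd, the obstruction to defining $\mathrm{Sym}^{c}$ over $\Q$ for $\SL_1(D')$ is the nontrivial class $[D']\in\mathrm{Br}(\Q)[2]$, of index $2$, so a $\Q$-form of $\mathrm{Sym}^{c}(\R^2)^{\oplus m_c}$ exists only when $m_c$ is even. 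This is the desired conclusion, and it is in essence Proposition \ref{ccpt=def|Q} applied one isotypic component at a time.

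The main obstacle is the "only if" direction, concretely two points: justifying that the isotypic component $\V_c$ inherits a $\Q$-structure stable under $\Gamma_2$ (which rests on the Galois-invariance of the isomorphism classes of irreducible $\SL_2$-representations, so that the isotypic decomposition descends to the $\Q$-structure and its Zariski closure is genuinely a three-dimensional $\Q$-form of $\SL_2$), and the Brauer-group bookkeeping that turns "$\mathrm{Sym}^{c,\oplus m}$ is defined over $\Q$ for $\SL_1(D')$ with $c$ odd" into "$m$ is even". The "if" direction should be comparatively routine once Theorem \ref{maintheorem} and the constructions in its proof are available; the only care needed there is to keep a single algebra $D$ fixed while building $U$ out of the various summands, and to note that when $\V$ has no even-dimensional irreducibles the hypothesis is vacuous and the construction still produces a cocompact lattice.
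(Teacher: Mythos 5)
Your proposal is correct in substance, but the ``only if'' direction follows a genuinely different route from the paper's. The paper never works isotypic component by isotypic component with a Brauer obstruction: it first proves that cocompactness of $\Gamma$ forces the \emph{entire} representation $\varphi$ to be defined over $\Q$ (Proposition \ref{ccpt=def|Q}, via Borel density and a Galois/Zariski-density argument on the coefficients of the matrix entries of $\varphi$), and then proves separately (Proposition \ref{for reducible action def|Q=even multiplicity}) that ``defined over $\Q$'' is equivalent to ``even multiplicities'': the necessity half is done by pairing off the even-dimensional constituents, observing that the leftover ones occur with multiplicity one, and invoking Proposition \ref{direct summand def over k} together with Lemma \ref{on R^2n not def|Q} to get a contradiction. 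Your argument instead descends the $\Q$-structure to each isotypic component $\V_c$ (this is exactly the paper's Appendix B, which you correctly identify as the point needing justification) and then appeals to the Schur-index fact that the obstruction class of $\mathrm{Sym}^c$ for $\SL_1(D')$ is $[D']$ of index $2$ when $c$ is odd, so the multiplicity must be even. That is a cleaner and more conceptual packaging, but note that the assertion ``the obstruction is the nontrivial class $[D']$ for odd $c$'' is precisely the content of the paper's Lemma \ref{action on k^2} and Proposition \ref{def|k}; the paper proves it by elementary Clebsch--Gordan induction, whereas you import it as standard theory, so your write-up is less self-contained at exactly the crux. Your ``if'' direction is essentially the paper's: the paper realizes $\mathrm{Sym}^{c}\oplus\mathrm{Sym}^{c}$ ($c$ odd) over $\Q$ by inductively tensoring $D\otimes_\Q\R\cong\R^2\oplus\R^2$ with $\mathrm{Sym}^2(\R^2)$ and splitting off $\Q$-defined summands via \eqref{CGFormula}, which is your $D\otimes_\Q\mathrm{Sym}^{c-1}$ construction done one step at a time (your one-shot extraction of the $\mathrm{Sym}^c$-isotypic piece, which has multiplicity two, also needs the Appendix B version of Proposition \ref{direct summand def over k} rather than the multiplicity-one version). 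Two small finishing touches: the stable lattice $\Lambda$ exists only after passing to a finite-index subgroup $H\subset\SL_1(\mathcal{O}_D)$ (as in Lemma \ref{def|Q=ccpt}), so the cocompact lattice is $\Lambda\rtimes H$; and your opening reduction ``$\Gamma$ cocompact iff $\Gamma_2$ cocompact'' is fine since $\R^n/\Gamma_1$ is compact.
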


\noindent
The article is organized as follows. A few relevant definitions and results on arithmetic subgroups of algebraic groups are laid down in Section \ref{notation}. In particular, we discuss arithmetic subgroups in groups of type $A_1$, thereby fixing the notation. Starting from \S\ref{lattice-in-SL2} we specialize to arithmetic subgroups in $\SL_2(\R)$ and thus, fix $k$ to be a totally real number field. Section \ref{s2} discusses when does an algebraic representation defined over $\bar{k}$, of the algebraic $k$-group $\mathbf{SL}_{1,D}$ for $D$ a quaternion division algebra over $k$, descend to a representation defined over $k$. In Section \ref{s3}, we see that the projection $\Gamma_2$ of a lattice $\Gamma$ in $\R^n\rtimes\SL_2(\R)$ on $\SL_2(\R)$, which is again a lattice in $\SL_2(\R)$, is an arithmetic subgroup (see Prop. \ref{arithmetic-1}). In the case when $\Gamma$ is a cocompact, $\Gamma_2$ is cocompact in $\SL_2(\R)$; and the arithmeticity and cocompactness of $\Gamma_2$ forces the action of $\SL_2(\R)$ on $\R^n$ to be defined over $\Q$ (see Prop. \ref{ccpt=def|Q}), or in other words, when $\Gamma$ is cocompact, $\Gamma_2=\SL_1(\mathcal{O}_D)$ for a quaternion division algebra $D$ defined over $\Q$ and split over $\R$ (see Prop. \ref{D_definedover_Q}). Finally, we give the proofs of the main theorems in Section \ref{s4}. Appendix A has comments on \cite[Cor. 8.28]{Rag-1} which is crucially used in this article, and Appendix B states a version of Proposition \ref{direct summand def over k} for abstract groups.

\section{Notation}\label{notation}
\noindent
In this section we set down the notation relevant to the study of arithmetic lattices in semisimple groups, in particular $\SL_2(\R)$, which will be used repeatedly in this article. We begin with stating the notion of arithmetic subgroups of an algebraic group. 
\subsection{Generalities on algebraic groups and arithmetic subgroups}
We let $G$ denote a semisimple algebraic group defined over a number field $k$. The notion of arithmetic subgroup for a linear algebraic groups is as follows. 
\begin{defn}
\label{arith-subgp}
For a linear algebraic group $G$ defined over $\Q$, a subgroup $\Gamma$ of $G$ is said to be an \textit{arithmetic subgroup}, if $\Gamma$ is commensurable with the subgroup $G(\Z)$.
\end{defn}
\noindent
In view of studying the lattices in Lie groups, we will be assuming a more general definition of arithmetic subgroups that respects commensurability, isomorphisms, and allows to ignore compact subgroups. We state here the definition of arithmetic subgroup as given in \cite[Def. 5.1.19]{DWM}.
\begin{defn}\label{arithmeticgroupsdef}
A subgroup $\Gamma$ of a semisimple real Lie group $H$ is an arithmetic subgroup if and only if there exists a connected semisimple algebraic group $G'$ defined over $\Q$, compact normal subgroups $K$ and $K'$ respectively of the identity connected component $H^\circ$ of $H$ and $G'(\R)$, and an isomorphism $\phi:H^\circ/K\rightarrow G'(\R)/K'$ such that $\phi(\overline{\Gamma})$ is commensurable to $\overline{G'(\Z)}$, where $\overline{\Gamma}$ and $\overline{G'(\Z)}$ are respectively the images of $\Gamma$ and $G'(\Z)$ in $H^\circ/K$ and $G'(\R)/K'$.
\end{defn}

\noindent
The cocompactness of arithmetic subgroups in semisimple groups is given by the compactness criterion due to Godement which states that for a semisimple algebraic group $G$ defined over $\Q$, $G(\R)/G(\Z)$ is compact if and only if $G(\Z)$ has no nontrivial unipotent elements.

\noindent
 For the classical groups, a complete classification of the $\Q$-forms \cite[Chapter 2]{P-R}, and hence of their arithmetic subgroups, is available in the literature. We refer the reader to the table in \cite[Section 18.5]{DWM} to know the arithmetic subgroups of a simple Lie group $G(\R)$. In particular, for $G=\mathbf{SL}_{n+1}$ the classification of arithmetic subgroups is achieved using central simple algebras defined over number fields. We collect a few related facts here for completeness.

\noindent
Using Definition \ref{arithmeticgroupsdef} we see that the Weil restriction of scalars functor provides a way to obtain more arithmetic subgroups of simple Lie groups as explained below. For a number field $k$, we let $\Sigma_k^\infty$ denote the set of distinct archimedean valuations of $k$ and $\mathcal{O}_k$ be the ring of integers in $k$. For any faithful embedding of $G$ in $\mathbf{GL}_n(\C)$ for some $n$, we define $G(\mathcal{O}_k)=G\cap\mathbf{GL}_n(\mathcal O_k)$. 
If $\sigma$ is an embedding of $k$ in $\C$, we let $G_\sigma$ denote the algebraic group defined over $\sigma(k)$.
The restriction of scalars to $\Q$ gives the algebraic group 
$\mathrm{Res}_{k/\Q}(G)$ defined over $\Q$ such that $\mathrm{Res}_{k/\Q}(G)(\C)=\prod_{\sigma}G_\sigma$, for $\sigma$ varying over all archimedean embeddings of $k$. Let
$G'\colonequals\mathrm{Res}_{k/\Q}(G)$. We direct the reader to  \cite[Cor. 6.1.4]{Zim}, \cite[Prop. 5.5.8]{DWM} and \cite[Prop. 5.5.10]{DWM} for a proof of the following proposition. 
\begin{proposition}
\label{ros}
	Let $G$ be a semisimple algebraic group defined over an algebraic number field
	$k$ and $G'=\mathrm{Res}_{k/\Q}(G)$. Then the following holds.\\
(1) ${G}(\mathcal{O}_k)$ embeds as an arithmetic subgroup of $G'(\R)$ via the diagonal embedding of $k$ in $\prod_{v\in\Sigma_k^\infty}k_v$.\\
(2) If $G$ is $k$-simple, then $G(\mathcal{O}_k)$ embeds as an irreducible arithmetic subgroup of $G'(\R)$.\\
(3) If $G_\sigma$ is compact for some $\sigma\in\Sigma^\infty_k$, then $G(\mathcal{O}_k)$ is cocompact.     
\end{proposition}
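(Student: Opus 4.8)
The plan is to deduce all three parts from the fundamental adjunction defining the Weil restriction of scalars, namely $\mathrm{Res}_{k/\Q}(G)(A)=G(A\otimes_\Q k)$ for $\Q$-algebras $A$, together with Godement's compactness criterion recalled above. Fix the faithful embedding $G\hookrightarrow\mathbf{GL}_n$ over $k$ used to define $G(\mathcal O_k)$, and let $\mathcal G$ be the $\mathcal O_k$-schematic closure of $G$ in $\mathbf{GL}_n$, so that $\mathcal G(\mathcal O_k)=G(\mathcal O_k)$. For part (1): the $\Z$-group scheme $\mathcal G'=\mathrm{Res}_{\mathcal O_k/\Z}(\mathcal G)$ has generic fibre $G'$, and the adjunction gives $\mathcal G'(\Z)=\mathcal G(\mathcal O_k)=G(\mathcal O_k)$, while $G'(\R)=G(k\otimes_\Q\R)=\prod_{v\in\Sigma_k^\infty}G(k_v)$, under which identification the inclusion $\mathcal G'(\Z)\hookrightarrow\mathcal G'(\R)$ is exactly the diagonal embedding of $G(\mathcal O_k)$. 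Since the restriction of scalars along the separable extension $k/\Q$ of a connected semisimple group is connected semisimple over $\Q$, this exhibits $G(\mathcal O_k)$ as an arithmetic subgroup of $G'(\R)$ in the sense of Definition \ref{arithmeticgroupsdef}; replacing the embedding or the integral model only alters it within its commensurability class.

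\textbf{For part (2)} the key point is that $G'=\mathrm{Res}_{k/\Q}(G)$ is $\Q$-simple whenever $G$ is $k$-simple; granting this, the irreducibility of the arithmetic subgroup $G'(\Z)$ is standard (cf. \cite[Prop. 5.5.10]{DWM}): were some finite-index subgroup of $G(\mathcal O_k)$ to split as a direct product along a nontrivial partition of $\Sigma_k^\infty$, the Zariski closures of the two factors would, by the Borel density theorem, be proper nontrivial connected normal $\Q$-subgroups of $G'$ almost directly spanning it, contradicting $\Q$-simplicity. To see $\Q$-simplicity I would use the structure theory of $k$-simple groups: write $G\cong\mathrm{Res}_{\ell/k}(G_0)$ with $G_0$ absolutely simple over a finite separable extension $\ell/k$; then $G'=\mathrm{Res}_{\ell/\Q}(G_0)$ by transitivity of restriction of scalars, so $G'_{\bar\Q}\cong\prod_{\tau\colon\ell\hookrightarrow\bar\Q}{}^{\tau}G_0$, and since $\mathrm{Gal}(\bar\Q/\Q)$ permutes the embeddings $\ell\hookrightarrow\bar\Q$ transitively it permutes the absolutely simple factors transitively, whence $G'$ has no proper nontrivial connected normal $\Q$-subgroup.

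\textbf{For part (3)} I would apply Godement's criterion to $G'$ over $\Q$: it suffices to check that $G'(\Z)=G(\mathcal O_k)$ has no nontrivial unipotent element. A positive-dimensional semisimple group has non-compact complex points, so the place $\sigma$ with $G_\sigma$ compact must be real and $G_\sigma(\R)$ is a compact real Lie group, hence contains no unipotent element other than the identity (a compact subgroup of $\mathbf{GL}_n(\R)$ is conjugate into $\mathrm{O}(n)$, in which a unipotent matrix is the identity). If $u\in G(\mathcal O_k)$ is unipotent, its image under the injection $G(k)\hookrightarrow G_\sigma(\R)$ induced by $\sigma$ is again unipotent, since a field embedding preserves the characteristic polynomial, so that image equals $e$, and therefore $u=e$. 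Thus $G'(\Z)$ has no nontrivial unipotents, $G'(\R)/G'(\Z)$ is compact, and $G(\mathcal O_k)$ is cocompact in $G'(\R)$.

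\textbf{Main obstacle.} Parts (1) and (3) are essentially formal once the adjunction and Godement's criterion are available. The substantive work is concentrated in part (2): verifying that $\mathrm{Res}_{k/\Q}(G)$ is $\Q$-simple via the Galois-transitivity argument above together with the classification $G\cong\mathrm{Res}_{\ell/k}(G_0)$, and making precise the passage from a product decomposition of a finite-index subgroup of $G(\mathcal O_k)$ to a decomposition of $G'$ over $\Q$, which requires the Borel density theorem and some care with fields of definition.
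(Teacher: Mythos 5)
The paper does not actually prove Proposition~\ref{ros}; it defers entirely to \cite[Cor.~6.1.4]{Zim} and \cite[Props.~5.5.8, 5.5.10]{DWM}, and your argument is precisely the standard proof underlying those citations: the adjunction for Weil restriction together with an integral model for part (1), $\Q$-simplicity of $\mathrm{Res}_{k/\Q}(G)$ via Galois transitivity plus Borel density for part (2), and Godement's criterion combined with the fact that a nontrivial unipotent cannot survive the embedding into a compact factor for part (3). The proposal is correct and takes essentially the same route as the paper's references.
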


\subsection{Groups of type $A_1$: $\Q$-forms and Lattices}\label{lattice-in-SL2}

Simple algebraic groups defined over a number field $k$ of type $A_n$ are the groups associated to the reduced norm $1$ elements of a central simple algebra defined over $k$ (see \cite[Section 2.3.1]{P-R}). The $\bar{k}$-rational points of these groups are the special linear groups $\SL_{n+1}$. Given our interest in arithmetic subgroups of the simple Lie group $\SL_2(\R)$, we restrict $k$ to be a totally real number field unless specified otherwise and consider quaternion central simple algebras defined over $k$ \cite[Chapter 5]{SK}. The classification of arithmetic subgroups of the classical groups is described in \cite[Section 18.5]{DWM}.  

\noindent
Let $A$ be a quaternion central simple algebra defined over $k$. By Artin-Wedderburn theorem, when $\dim_k(A)=4$, there are the following two possibilities: (1) $A=D$, a quaternion division algebra, or (2) $A=\M_2(k)$, the algebra of split quaternions. In the former case, the left regular representation of $D$ enables us to construct an algebraic group defined over $k$, which is denoted $\mathbf{SL}_{1,D}$, such that $$\mathbf{SL}_{1,D}(k)=\SL_1(D)\colonequals\{b\in D^* : \rm{Nrd}(b)=1\},$$ where $\mathrm{Nrd}:D^*\rightarrow k^*$ is the {\it reduced norm} map. Using the isomorphism $D\otimes_k\bar{k}\simeq \M_2(\bar{k})$, we see that $\mathbf{SL}_{1,D}$ is a simply connected simple group of type $A_{1}$ defined over $k$. 

\noindent
\cite[Prop. 6.2.6 ]{DWM} tells that an arithmetic subgroup $\Gamma$ of $\SL_2(\R)$ is {\it cocompact (uniform)} if and only if the corresponding central simple algebra is a quaternion {\it division} algebra $D$ defined over a totally real number field $k$ and which is ramified at all but one place in $\Sigma_k^\infty$. 
In this case $\Gamma$ is commensurable to $\mathrm{SL}_1(\mathcal{O}_D)$, where $\mathcal{O}_D$ is a maximal order in $D$ and the cocompactness of $\SL_1(\mathcal O_D)$ in $\SL_2(\R)$ is proved using Godement criterion. Moreover, non-isomorphic quaternion central simple algebras define different commensurability classes of arithmetic subgroups of $\SL_2(\R)$ and all such classes arise in this way.

\noindent
When $k\neq \Q$, we appeal to Proposition \ref{ros} to obtain arithmetic subgroups of $\SL_2(\R)$ as described below. By the assumption on $D$,
\[D\otimes_{k}\R\cong\M_2(\R)\times\underbrace{\mathbb{H}\times\cdots\times\mathbb{H}}_{(m-1)\mbox{ times}},\] 
where $\mathbb{H}$ is the algebra of Hamilton's quaternions over $\R$. Using Proposition \ref{ros} we see that $\mathbf{SL}_{1,D}(\mathcal{O}_k)$ embeds as an arithmetic lattice in 
\begin{align*} 
\mathrm{Res}_{k/\Q}\mathbf{SL}_{1,D}(\R) &=\SL_1(D\otimes_{k}\R)\\ 
&=\SL_1(\M_2(\R)\times\underbrace{\mathbb{H}\times\mathbb{H}\times\cdots\times\mathbb{H}}_{(m-1)\ \mbox{times}})\\
 &=\SL_2(\R)\times\underbrace{\mathrm{SU}(2)\times\mathrm{SU}(2)\times\cdots\times\mathrm{SU}(2).}_{(m-1)\ \mbox{times}} 
 \numberthis \label{eqn1}
\end{align*}

\noindent
Thus, going modulo the compact factors of $\mathrm{SU}(2)$ in \eqref{eqn1} we obtain a cocompact arithmetic lattice in $\SL_2(\R)$. In the case when the quaternion central simple algebra is split, we get the non-cocompact arithmetic lattice $\Gamma\cong\SL_2(\Z)$ of $\SL_2(\R)$. In fact, from \cite[Prop. 6.1.5]{DWM} we see that it is the only non-cocompact arithmetic subgroup in $\SL_2(\R)$ up to commensurability and conjugates. We summarize the above discussions in the following proposition.
\begin{proposition}
For the simple group $\SL_2(\R)$, up to conjugates and commensurability, $\SL_2(\Z)$ and $\SL_1(\mathcal{O}_D)$, where $D$ is a quaternion central division algebra defined over $k$ and split at exactly one real place of $k$, are all the arithmetic subgroups. Moreover, the cocompact arithmetic subgroups are of the form $\SL_1(\mathcal{O}_D)$.

\end{proposition}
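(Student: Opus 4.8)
The plan is to assemble the statement from the structure theory of semisimple $\Q$-groups of type $A_1$, together with the two inputs recalled above: the classification of quaternion central simple algebras and Godement's compactness criterion. Let $\Gamma\subset\SL_2(\R)$ be an arithmetic subgroup in the sense of Definition \ref{arithmeticgroupsdef}. First I would unpack that definition: there is a connected semisimple $\Q$-group $G'$, a compact normal subgroup $K'\trianglelefteq G'(\R)$, and an isomorphism $\SL_2(\R)\xrightarrow{\ \sim\ } G'(\R)/K'$ carrying $\Gamma$ (up to commensurability) onto the image of $G'(\Z)$. Decompose $G'$, up to $\Q$-isogeny — which does not affect arithmetic subgroups up to commensurability — into almost-$\Q$-simple factors $\prod_i \mathrm{Res}_{k_i/\Q} H_i$ with $H_i$ absolutely simple over a number field $k_i$. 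Since $G'(\R)/K'\cong\SL_2(\R)$ has a single non-compact, real-rank-one simple factor of real dimension $3$, exactly one of the real factors $\prod_{\sigma\in\Sigma_{k_i}^\infty}(H_i)_\sigma(\R)$ is non-compact and it is isomorphic to $\SL_2(\R)$, while all remaining $\Q$-simple factors are $\R$-anisotropic. A complex place $\sigma$ of $k_i$ would contribute a factor $\SL_2(\C)$, which is neither compact nor isomorphic to $\SL_2(\R)$; hence the relevant field $k=k_i$ is totally real, and exactly one archimedean place of $k$ is ``split''. Replacing $G'$ by the distinguished factor, we are reduced to classifying the simply connected absolutely simple $k$-groups $H$ of type $A_1$, over a totally real field $k$, that are split at exactly one real place.

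By the classification of type-$A_n$ forms (\cite[Section 2.3.1]{P-R}, recalled above), such an $H$ equals $\mathbf{SL}_{1,A}$ for a quaternion central simple $k$-algebra $A$, and Artin--Wedderburn gives the dichotomy $A=\M_2(k)$ or $A=D$ a quaternion division algebra. The archimedean behaviour is read off from the ramification of $A$: at a real place $v$ one has $A\otimes_k k_v\cong\M_2(\R)$, giving the non-compact factor $\SL_2(\R)$, or $A\otimes_k k_v\cong\mathbb{H}$, giving the compact factor $\mathrm{SU}(2)$; so ``split at exactly one real place'' is precisely the hypothesis on $A$ placed in the statement. If $A=\M_2(k)$, being split at all but one real place forces $k=\Q$, and then $\Gamma$ is commensurable to $\SL_2(\Z)$; that this is the only commensurability/conjugacy class of arithmetic subgroups arising from a split quaternion algebra is \cite[Prop. 6.1.5]{DWM}. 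If $A=D$, then by Proposition \ref{ros} the group $\mathbf{SL}_{1,D}(\mathcal O_k)$ is an arithmetic lattice in $\mathrm{Res}_{k/\Q}\mathbf{SL}_{1,D}(\R)=\SL_2(\R)\times\mathrm{SU}(2)^{m-1}$ as in \eqref{eqn1}, hence maps, modulo the compact factors, to an arithmetic subgroup of $\SL_2(\R)$ commensurable with $\SL_1(\mathcal O_D)$ for a maximal order $\mathcal O_D\subset D$; and non-isomorphic $D$ give distinct commensurability classes by \cite[Prop. 6.2.6]{DWM}.

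It remains to prove the cocompactness clause, which follows from Godement's criterion once the two cases are separated. If $A=\M_2(\Q)$, then $\SL_2(\Z)$ contains the nontrivial unipotent element $\left(\begin{smallmatrix}1&1\\0&1\end{smallmatrix}\right)$, so it is not cocompact in $\SL_2(\R)$. If $A=D$ is a division algebra, then $\mathbf{SL}_{1,D}(\mathcal O_k)$ has no nontrivial unipotents: a unipotent $u\neq 1$ would make $u-1$ a nonzero nilpotent element of the division ring $D$, which is impossible; Godement's criterion then makes $\mathbf{SL}_{1,D}(\mathcal O_k)$ cocompact in $\mathrm{Res}_{k/\Q}\mathbf{SL}_{1,D}(\R)$, and therefore $\SL_1(\mathcal O_D)$ is cocompact in $\SL_2(\R)$. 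Combining the two cases yields exactly the asserted dichotomy. The only genuinely delicate point is the structural reduction of the first paragraph — isolating the single non-compact $\Q$-simple factor and recognizing it as $\mathrm{Res}_{k/\Q}$ of a group over a totally real field — and in the write-up I would carry this out by direct appeal to the classification tables of \cite[Section 18.5]{DWM} and the references to \cite{P-R} rather than reproving standard structure theory.
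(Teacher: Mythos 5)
Your proof is correct and follows essentially the same route as the paper: the classification of $\Q$-forms of $\SL_2(\R)$ by quaternion central simple algebras over totally real fields, restriction of scalars to pass modulo the compact $\mathrm{SU}(2)$ factors, and Godement's criterion to separate the cocompact from the non-cocompact case. The only difference is one of exposition — the paper delegates the structural reduction and the classification to \cite[Props.~6.1.5 and 6.2.6]{DWM}, whereas you unpack those citations explicitly (isolating the single non-compact $\Q$-simple factor over a totally real field, and noting that a nontrivial unipotent $u$ would make $u-1$ a nonzero nilpotent element of the division ring $D$) — so the substance is the same.
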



\subsection{Representation of $\SL_1(D)$ on $D$}
\noindent
We end this section with an important observation about the left multiplicative action of $\SL_1(D)$ on $D$. Observe that a quaternion central division algebra $D$ defined over $k$ necessarily splits over the algebraic closure $\bar{k}$ of $k$ so that, $D\otimes_{k}{\bar{k}}\simeq\M_2(\bar{k})$. Thus, for any embedding $\psi:D\hookrightarrow\M_2(\bar{k})$, the norm 1 elements $\SL_1(D)$ of $D$ embeds inside $\SL_2(\bar{k})$. The left regular action of $\SL_1(D)$ on $D$ is given by restricting the left matrix multiplication action of $\SL_2(\bar{k})$ on $\M_2(\bar{k})$ to the respective images of $\SL_1(D)$ and $D$. Note that the latter representation is not irreducible. Indeed, under the embedding $\psi$, $\M_2(\bar{k})\simeq{\bar{k}}^2\oplus\bar{k}^2$ as $\SL_2(\bar{k})$-representation, where $\bar{k}^2$ denotes the standard two-dimensional representation of $\SL_2(\bar{k})$.
We denote the standard $2$-dimensional representation of $\mathbf{SL}_{1,D}(\bar{k})$ on the space $\bar{k}^2$ by $\rho$. This is an algebraic representation defined over $\bar{k}$. The group $\mathbf{SL}_{1,D}(\bar{k})$ also acts algebraically on ${\rm{Sym}}^{n}(\bar{k}^2)\simeq\bar{k}^{n+1}$ for $n\in\mathbb N$ and we denote these representations by $\mathrm{Sym}^{n}\rho$. The representation $\rho$ and its symmetric powers $\mathrm{Sym}^{n}\rho$ are irreducible.

\section{Representations of $\mathbf{SL}_{1,D}(\bar{k})$}\label{s2}
\noindent
In order to prove Theorem \ref{maintheorem}, one needs to understand when is the representation $\mathrm{Sym}^m\rho$ of the algebraic $\bar{k}$-group $\mathbf{SL}_{1,D}(\bar{k})$ on ${\rm{Sym}}^{m}(\bar{k}^2)$ defined over $k$. In this section we prove a few preliminary results in this direction. We continue with the notation in the previous section so that $D$ is a quaternion central division algebra over a totally real number field $k$ which is split at exactly one real place of $k$.

\begin{lemma}\label{action on k^2}
  The standard representation $\rho$ of $\mathbf{SL}_{1,D}(\bar{k})$ on $\bar{k}^2$ is not defined over $k$.
\end{lemma}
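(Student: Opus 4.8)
The plan is to argue by contradiction: suppose $\rho$ were defined over $k$, i.e.\ there is a $k$-structure on the $\bar k$-vector space $\bar k^2$ that is $\mathbf{SL}_{1,D}$-stable, giving a $k$-representation $\mathbf{SL}_{1,D}\to\mathbf{GL}_{2}$ whose base change to $\bar k$ is $\rho$. Then I would extract a contradiction from the observation already recorded in the previous subsection, namely that $\M_2(\bar k)\simeq \bar k^2\oplus\bar k^2$ as a $\mathbf{SL}_{1,D}(\bar k)$-representation, while the left regular representation of $\SL_1(D)$ on $D$ is genuinely defined over $k$ (it is the left multiplication action on the $4$-dimensional $k$-algebra $D$). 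The point is that if $\rho$ descended to $k$, then the $k$-representation $D$ would decompose over $k$ (or at least over a quadratic extension in a controlled way), whereas $D$ being a \emph{division} algebra forces $D$ to be irreducible as a module over itself, hence the left regular $k$-representation of $\mathbf{SL}_{1,D}$ on $D$ has no proper $k$-rational invariant subspace of the right dimension.

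Concretely, the key steps, in order: (1) Recall that $\mathrm{End}_{\mathbf{SL}_{1,D}(\bar k)}(\bar k^2)=\bar k$ (Schur, since $\rho$ is absolutely irreducible), so a $k$-form of $\rho$, if it exists, is unique up to $k$-isomorphism and its endomorphism algebra over $k$ is a field containing $k$. (2) Identify the left regular representation of $\mathbf{SL}_{1,D}$ on $D$: over $k$ its commutant is $D^{\mathrm{op}}$ (acting by right multiplication), and over $\bar k$ the representation $D\otimes_k\bar k\simeq \M_2(\bar k)$ splits as $\rho\oplus\rho$ with commutant $\M_2(\bar k)$. (3) If $\rho$ were defined over $k$, say $\rho$ is realized on a $2$-dimensional $k$-space $W$, then $W\oplus W$ would be a $k$-form of $D\otimes_k\bar k$ as a representation of $\mathbf{SL}_{1,D}$; but then the commutant of this $k$-representation is $\M_2(E)$ where $E=\mathrm{End}_{\mathbf{SL}_{1,D}(k)}(W)$, a field. (4) Derive the contradiction: $D$ and $W\oplus W$ would be two $k$-forms of the same $\bar k$-representation with non-isomorphic commutants — $D^{\mathrm{op}}$ is a division algebra of dimension $4$ over $k$, whereas $\M_2(E)$ always contains zero divisors (nilpotents). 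Since $k$-forms of a fixed absolutely-semisimple representation are classified by their commutant algebras (Galois descent / the fact that a representation is determined by its isotypic data), this is impossible. Hence $\rho$ is not defined over $k$.

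An alternative, more hands-on route avoiding descent formalism: if $\rho$ were defined over $k$, the embedding $\mathbf{SL}_{1,D}\hookrightarrow \mathbf{GL}_2$ would be defined over $k$, giving an injection $\SL_1(D)=\mathbf{SL}_{1,D}(k)\hookrightarrow \GL_2(k)\subset \M_2(k)$. The $k$-span of the image $\SL_1(D)$ inside $\M_2(k)$ is a $k$-subalgebra; one checks that $\SL_1(D)$ spans $D$ over $k$ (e.g.\ because $1-\SL_1(D)$ together with $\SL_1(D)$ generates $D$, or directly: $D$ has no proper subalgebra containing enough norm-$1$ elements since such elements are Zariski-dense in $\mathbf{SL}_{1,D}$ and $\mathbf{SL}_{1,D}$ spans $D$ over $\bar k$ inside $\M_2(\bar k)$ by the double-representation identity). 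This would embed the $4$-dimensional division algebra $D$ as a $k$-subalgebra of $\M_2(k)$, which is absurd since $\M_2(k)$ has dimension $4$ over $k$ and $\M_2(k)\not\cong D$ (one is split, the other is a division algebra).

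The main obstacle I expect is step (4)/the corresponding step in the alternative argument: making rigorous the claim that the left regular $k$-representation of $\mathbf{SL}_{1,D}$ on $D$ is, up to $k$-isomorphism, \emph{not} a direct sum of two copies of a $2$-dimensional $k$-representation — equivalently, controlling the $k$-forms of the absolutely reducible representation $\rho\oplus\rho$ via their commutant algebras and distinguishing the division algebra $D^{\mathrm{op}}$ from $\M_2(E)$. The cleanest formulation is via the bijection between $k$-forms of a given finite-dimensional $\bar k$-representation (with semisimple $\bar k$-endomorphism algebra) and twisted forms of its endomorphism algebra, i.e.\ a Galois-cohomology statement; invoking that, the non-existence follows because $D^{\mathrm{op}}$ and $\M_2(k)$ are not isomorphic $k$-algebras.
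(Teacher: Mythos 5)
Your proposal is correct, and in fact contains two proofs. Your ``alternative, more hands-on route'' is essentially the paper's own argument: assume $\rho(\SL_1(D))\subseteq\SL_2(k)$ in some basis, observe that $\SL_1(D)$ spans $D$ over $k$ (the paper gets this by noting, via Hilbert 90, that the regular representation of $\SL_1(D)$ on $D$ is irreducible, so the subalgebra generated by $\SL_1(D)$ is all of $D$; your Zariski-density argument achieves the same), conclude that the division algebra $D$ embeds as a $k$-subalgebra of $\M_2(k)$, and derive a contradiction by comparing dimensions. Your primary route, via commutants of $k$-forms, is genuinely different and also works: once you know (by Noether--Deuring, or the descent formalism you sketch) that $D\simeq W\oplus W$ as $k[\SL_1(D)]$-modules whenever $W$ is a $k$-form of $\rho$, comparing $\mathrm{End}_{\SL_1(D)}(D)=D^{\mathrm{op}}$ with $\mathrm{End}(W\oplus W)=\M_2(k)$ gives the contradiction. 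What the commutant approach buys is a statement that generalizes cleanly (it is really the observation that the class of $D$ in the Brauer group obstructs rationality of $\rho$, which foreshadows the even/odd dichotomy of Proposition \ref{def|k}); what the paper's approach buys is brevity and the avoidance of any descent machinery. The one step you rightly flag as the obstacle --- ruling out $D\simeq W\oplus W$ over $k$ --- is exactly where Noether--Deuring (or your Galois-cohomology classification of forms) must be invoked, and either suffices.
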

\begin{proof}
 Suppose on the contrary that $\exists$ a $\bar{k}$-basis with respect to which $\rho(\SL_1(D))\subseteq\SL_2(k)$. As an application of Hilbert's theorem 90, it is easily seen that the regular representation of $\SL_1(D)$ on $D$ is irreducible. Thus, the algebra generated by $\SL_1(D)$, which is also an $\SL_1(D)$-invariant subspace of $D$, is all of $D$. Since $\rho$ is faithful, it follows that $D\subseteq\M_2(k)$; but, $D$ and $\M_2(k)$ have the same dimension over $k$. Therefore, $D=\M_2(k)$ as an algebra, giving a contradiction.
\end{proof}

\begin{lemma}\label{action on k^3}
The representation $\mathrm{Sym}^2\rho$ of $\mathbf{SL}_{1,D}(\bar{k})$ on ${\rm{Sym}}^2(\bar{k}^2)$ is defined over $k$.
\end{lemma}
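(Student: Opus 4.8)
The plan is to produce an explicit $k$-form of $\mathrm{Sym}^2\rho$ sitting inside the $k$-algebra $D$, namely the adjoint representation of $\mathbf{SL}_{1,D}$. Write $D^0=\{x\in D:\mathrm{Trd}(x)=0\}$ for the subspace of reduced-trace-zero elements, where $\mathrm{Trd}\colon D\to k$ is the reduced trace. This is a $3$-dimensional $k$-vector space, and $\SL_1(D)$ acts $k$-linearly on it by conjugation $b\cdot x=bxb^{-1}$ (conjugation preserves the reduced trace, so $D^0$ is invariant). Everything here takes place inside $D$, so this defines an algebraic representation $\mathrm{Ad}\colon\mathbf{SL}_{1,D}\to\mathbf{GL}(D^0)$ over $k$; concretely $D^0=\mathrm{Lie}(\mathbf{SL}_{1,D})$ and $\mathrm{Ad}$ is its adjoint representation.

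Next I would base change to $\bar k$. The isomorphism $D\otimes_k\bar k\simeq\M_2(\bar k)$ carries $D^0\otimes_k\bar k$ onto the trace-zero matrices $\mathfrak{sl}_2(\bar k)$ and $\mathbf{SL}_{1,D}(\bar k)$ onto $\SL_2(\bar k)$, so that $\mathrm{Ad}\otimes_k\bar k$ becomes the usual conjugation action of $\SL_2(\bar k)$ on $\mathfrak{sl}_2(\bar k)$, that is, the adjoint representation of $\SL_2$. It then remains to recall the standard fact that in characteristic zero this adjoint representation is isomorphic to $\mathrm{Sym}^2$ of the standard representation: both are irreducible of dimension $3$ (highest weight twice the fundamental weight), and there is a unique such. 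Hence $\mathrm{Sym}^2\rho\simeq\mathrm{Ad}\otimes_k\bar k$ as representations of $\mathbf{SL}_{1,D}(\bar k)$, and since $\mathrm{Ad}$ is defined over $k$, so is $\mathrm{Sym}^2\rho$.

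I do not expect a genuine obstacle here; the only point requiring a moment's care is that an abstract $\bar k$-isomorphism $\mathrm{Sym}^2\rho\simeq\mathrm{Ad}\otimes_k\bar k$ already suffices. This is immediate from the working notion of ``defined over $k$'': exhibiting a single $k$-model --- here $(D^0,\mathrm{Ad})$ --- whose extension of scalars to $\bar k$ is isomorphic to $\mathrm{Sym}^2\rho$ is enough, and no compatibility of the isomorphism with the Galois action is needed. (Alternatively, one could avoid the adjoint-representation language entirely and argue directly with $D$: its conjugation action decomposes over $\bar k$ as $\M_2(\bar k)=\bar k\cdot I\oplus\mathfrak{sl}_2(\bar k)$, so the trivial summand is the $k$-line $k\subseteq D$ and $\mathrm{Sym}^2\rho$ is realized over $k$ on the complement $D^0$.)
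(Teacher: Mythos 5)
Your proof is correct and takes essentially the same route as the paper: both realize $\mathrm{Sym}^2\rho$ as the conjugation (adjoint) action of $\SL_1(D)$ on the trace-zero subspace $D^0\subset D$, observe that $D^0\otimes_k\bar k$ is the space of trace-zero matrices in $\M_2(\bar k)$, and invoke highest weight theory to identify this $3$-dimensional irreducible with $\mathrm{Sym}^2(\bar k^2)$. Your explicit remark that an abstract $\bar k$-isomorphism with a $k$-model suffices is a point the paper leaves implicit, but it does not change the argument.
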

\begin{proof}
 The proof proceeds by exhibiting a $k$-structure on $\mathrm{Sym}^2(\bar{k}^2)$ such that the representation $\mathrm{Sym}^2\rho$ with respect to this $k$-structure descends to a representation defined over $k$. 
Define the representation $\rho'$ of $\mathbf{SL}_{1,D}(\bar{k})$ on the $3$-dimensional $\bar{k}$ subspace $V$ of trace $0$ matrices in $\M_2(\bar{k})$ as 
$$\rho'(g)(v)=gvg^{-1},\ \forall{g}\in{\mathbf{SL}_{1,D}(\bar{k})}\ \mbox{and}\ \forall{v}\in{V}.$$
 Let $D^{0}$ be the subspace of trace 0 elements in $D$. As $D$ splits over $\bar{k}$, $V= D^{0}\otimes_{k}{\bar{k}}$. Then $D^{0}\hookrightarrow V$ is a $3$-dimensional $k$-subspace which  is stable under the action of $\SL_1(D)$. Thus, the representation ($V,\rho'$) is defined over $k$. By the highest weight theory for $\SL_2$ representations, $V\cong{\rm{Sym}}^2(\bar{k}^2)$. Hence, the lemma. 
\end{proof}

\noindent
We will extend the dichotomy provided by the Lemmas \ref{action on k^2} and \ref{action on k^3} to higher dimensional representations of $\mathbf{SL}_{1,D}$ in Proposition \ref{def|k}. The next proposition applies to reducible representations of $\mathbf{SL}_{1,D}(\bar{k})$ on $\bar{k}^{m+1}$. The proposition holds true in greater generality for any semisimple representation of an abstract group on a $k$-vector space $V$ and isotypic $G$-submodules of $V\otimes_k\bar{k}$. We prove the result here for a reducible representation of a semisimple algebraic group $G$ defined over $k$ and $G(k)$-submodules of $V\otimes_k\bar{k}$ having multiplicity one. Appendix B comprises the proof of the general statement.

\begin{proposition}\label{direct summand def over k}

Let $V$ be an $n$-dimensional vector space defined over $k$ and $G$ be a semisimple algebraic group defined over $k$. Let $\phi$ be an algebraic representation of $G$ on $V$ defined over $k$. Suppose $V_i\subset V\otimes_k\bar{k},$ for $i=1,\ldots,l$ are irreducible $G(k)$-invariant subspaces defined over $\bar{k}$, each appearing with multiplicity one in $V\otimes_k\bar{k}$. Then each $(V_{i},\phi_{i})$ is defined over $k$, where $\phi_{i}$ is $\phi\mid_{V_{i}}$.
\end{proposition}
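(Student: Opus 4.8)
The plan is to use the fact that multiplicity-one isotypic components are canonically defined, hence must be Galois-stable, and therefore descend. Concretely, let $K/k$ be a finite Galois extension contained in $\bar k$ over which all the $V_i$ are defined (such a $K$ exists since each $V_i$ is defined over some finite extension of $k$). For $\tau\in\mathrm{Gal}(K/k)$, the $k$-structure on $V$ gives a $\tau$-semilinear automorphism of $V\otimes_k K$, and since $\phi$ is defined over $k$ this semilinear map intertwines the $G$-action in the appropriate $\tau$-twisted sense; in particular it carries $G(k)$-invariant subspaces to $G(k)$-invariant subspaces. First I would check that $\tau(V_i)$ is again an irreducible $G(k)$-submodule of $V\otimes_k K$ isomorphic to $V_i$ as an abstract $G(k)$-module: irreducibility is preserved because $\tau$ is a bijection commuting with the $G(k)$-action up to the twist, and the isomorphism type is preserved because applying $\tau$ to matrix coefficients of $\rho_i$ recovers (a $\tau$-conjugate of) the same algebraic representation, whose isomorphism class over $\bar k$ is insensitive to the twist for the reason that $\mathrm{Sym}^m\rho$-type representations of a form of $\mathrm{SL}_2$ have Galois-invariant highest weights. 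Since $V_i$ appears with multiplicity one in $V\otimes_k\bar k$, it is the unique submodule in its isomorphism class, so $\tau(V_i)=V_i$ for all $\tau\in\mathrm{Gal}(K/k)$.

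Next I would invoke Galois descent: a $K$-subspace of $V\otimes_k K$ that is stable under the semilinear Galois action descends to a $k$-subspace $W_i\subset V$ with $W_i\otimes_k\bar k=V_i$. Because the Galois action here is precisely the one coming from the $k$-structure on $V$ and commutes (suitably) with $G$, the subspace $W_i$ is $G(k)$-stable, and in fact $\phi$ restricts to an algebraic representation $\phi_i$ on $W_i$ defined over $k$; base-changing to $\bar k$ recovers $(V_i,\phi_i)$. This is exactly the conclusion sought.

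The main obstacle I anticipate is justifying rigorously that $\tau(V_i)$ and $V_i$ are isomorphic as $G(k)$-modules — i.e. that the Galois twist does not change the abstract isomorphism class, which is what forces $\tau(V_i)=V_i$ via multiplicity one. The cleanest way is probably to argue at the level of characters or highest weights: the representation $\phi$ is defined over $k$, so its character (as a function $G(k)\to k$) is Galois-invariant, and since the $V_i$ are the isotypic pieces, the partition of $\phi\otimes\bar k$ into the $V_i$ is intrinsic; $\tau$ permutes the $V_i$ among themselves, and multiplicity one means the permutation is trivial. One must be a little careful that "multiplicity one" is being used for $G(k)$-modules rather than $G$-modules, and that over the possibly-non-closed field $K$ the $V_i$ remain irreducible as written in the hypothesis; but these are exactly the standing assumptions of the proposition, so no extra work is needed. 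Apart from this point, the argument is the standard descent-of-isotypic-components technique and the remaining steps are routine.
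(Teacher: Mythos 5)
Your proposal is correct and follows essentially the same route as the paper: show that the semilinear Galois action $\mathrm{I}_n\otimes\sigma$ coming from the $k$-structure on $V$ commutes with $\phi(g)$ for $g\in G(k)$, deduce from irreducibility and multiplicity one that each $V_i$ is Galois-stable, and then descend. The only cosmetic difference is that you invoke Galois descent for subspaces as a black box over a finite Galois extension, whereas the paper proves that step by hand, exhibiting a nonzero vector of $V\cap V_i$ as a trace $\sum_\sigma f_\sigma(v)$ of a suitably normalized $v\in V_i$ and setting $W_i=V_i\cap V$.
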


\begin{proof} 
It suffices to find a $k$-subspace $W_{i}$ of $V_{i}$ which is $G(k)$-stable and such that $V_{i}\cong W_{i}\otimes_{k}\bar{k},\mbox{ for } 1\leq{i}\leq{l}$. 
For any $\sigma\neq\mathrm{id}$ in $\mathrm{Gal}(\bar{k}/k)$, the map  $f_{\sigma}\colonequals\mbox{I}_{n}\otimes_{k}\sigma$, where $\mbox{I}_{n}$ is the identity map on $V$, is a $k$-linear isomorphism of $V\otimes_{k}\bar{k}$ onto itself which commutes with $\phi{(g)}$ for all $g\in G(k)$.
Hence, since $V_i$ are irreducible $G(k)$-submodules of $V\otimes\bar{k}$, so are $f_{\sigma}(V_{i})$ for $i\in\{1,\ldots,l\}$. Thus, $$f_{\sigma}(V_{i})=V_{i}, \ \forall\  1\leq{i}\leq{l}.$$
To show the existence of the $k$-subspace $W_i$, we need to see that $V\cap{V_{i}}\neq\{0\}$. Suppose $v$ be any non zero element in $V_i$. Let $E$ be a finite extension of $k$ generated by the coordinates of $v$ with respect to a $k$-basis $\mathcal{B}$ of $V$. If necessary, modify $v$ to $\tilde{v}$ to get at least one non-zero coordinate of each $v$ with respect to $\mathcal B$ as $1$. Then  the coordinates of $\omega\colonequals\sum_{\sigma\in\mathrm{Gal}(E/k)}f_\sigma(v)$ are the trace from $E$ to $k$ of the coordinates of $v$. Since one of the coordinate is assumed to be $1$, $\omega$ is a non zero element lying in $V\cap V_i$.
Thus, $W_i\colonequals V_i\cap{V}$ is a non trivial subspace of $V_i$ defined over $k$ and it is easily seen to be a $G(k)$-submodule of $V_{i}$. Since each $V_i$ is irreducible, we have $V_i\cong W_i\otimes_k\bar{k}$.
\end{proof}

\begin{proposition}\label{def|k}
The irreducible representation $\mathrm{Sym}^m\rho$ of $\mathbf{SL}_{1,D}(\bar{k})$ on ${\mathrm{Sym}}^{m}(\bar{k}^2)=\bar{k}^{m+1}$ is defined over ${k}$ if and only if $m$ is even.
\end{proposition}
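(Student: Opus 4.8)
The plan is to prove the two directions separately, using the representation theory of $\mathbf{SL}_{1,D}$ together with the quaternionic structure.

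\textbf{The "if" direction ($m$ even).} Suppose $m = 2r$. The key structural observation is that $\mathrm{Sym}^m\rho = \mathrm{Sym}^{2r}\rho$ appears inside the $r$-th symmetric (or tensor) power of the adjoint-type representation $\rho' = \mathrm{Sym}^2\rho$ on the trace-zero matrices $V$, which by Lemma~\ref{action on k^3} is defined over $k$ (with $k$-structure $D^0$). Concretely, I would consider the $k$-vector space $\mathrm{Sym}^r(D^0)$, on which $\SL_1(D)$ acts $k$-linearly via $\mathrm{Sym}^r\rho'$; after extending scalars this becomes $\mathrm{Sym}^r(V) \cong \mathrm{Sym}^r(\mathrm{Sym}^2(\bar k^2))$. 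By Clebsch--Gordan / highest weight theory, $\mathrm{Sym}^{2r}(\bar k^2)$ occurs in $\mathrm{Sym}^r(\mathrm{Sym}^2(\bar k^2))$ with multiplicity exactly one (it is the unique top weight summand). Now I invoke Proposition~\ref{direct summand def over k} with $V = \mathrm{Sym}^r(D^0)$, $G = \mathbf{SL}_{1,D}$, and $V_1 = \mathrm{Sym}^{2r}(\bar k^2) \subset V\otimes_k\bar k$: since this irreducible summand has multiplicity one, it is defined over $k$. This gives the "if" direction. I should double-check the multiplicity-one claim — it follows from the decomposition $\mathrm{Sym}^2(\mathrm{Sym}^2) = \mathrm{Sym}^4 \oplus \mathrm{Sym}^0$ and induction, or from the theory of the Hermite/Gordan–Clebsch decomposition of plethysms of $\SL_2$; in any case $V_{2r}$ occurs, and it occurs once.

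\textbf{The "only if" direction ($m$ odd).} Suppose $m = 2r+1$ and, for contradiction, that $\mathrm{Sym}^m\rho$ is defined over $k$. The idea is to reduce to Lemma~\ref{action on k^2}, i.e.\ to show that the standard representation $\rho$ would then also be defined over $k$. The cleanest route: if $W$ is a $k$-form of $\mathrm{Sym}^{2r+1}(\bar k^2)$ on which $\SL_1(D)$ acts, then the algebra of $\bar k$-endomorphisms commuting with $\mathbf{SL}_{1,D}(\bar k)$ is $\bar k$ (Schur), so one analyzes the $k$-algebra $B$ generated by the image of $\SL_1(D)$ in $\mathrm{End}_k(W)$, or better, one uses that $\mathrm{Sym}^{2r+1}(\bar k^2) \otimes \mathrm{Sym}^{2r+1}(\bar k^2)$ contains $\mathrm{Sym}^2(\bar k^2)$ and also the trivial representation, while the "middle" tensor operations let one recover $\rho$ up to a sign/twist. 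A more transparent argument uses the fact that $\SL_1(D) \subset \SL_2(\bar k)$, and the element $-I$ acts on $\mathrm{Sym}^{m}(\bar k^2)$ by $(-1)^m$; for $m$ odd this is $-I$, so $\mathrm{Sym}^m\rho$ is a faithful representation of $\SL_1(D)$, whereas for $m$ even it factors through $\mathrm{PGL}$-type quotient. Being faithful and $k$-defined, the $k$-algebra generated by $\mathrm{Sym}^m\rho(\SL_1(D))$ inside $\mathrm{End}_k(W)$, being a quotient of the group algebra and containing a faithful copy of the Zariski-dense $\SL_1(D)$, has center containing (a twist of) $D$; pushing this through as in Lemma~\ref{action on k^2} forces $D$ to split over $k$, contradicting that $D$ is a division algebra. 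I would want to make this last step precise by arguing directly: the $\bar k$-span of $\mathrm{Sym}^m\rho(\SL_1(D))$ equals $\mathrm{End}_{\bar k}(\mathrm{Sym}^m(\bar k^2))$ by irreducibility and Jacobson density; descending, the $k$-span is $\mathrm{End}_k(W)$; but composing with the embedding coming from $\rho$ (using $\rho \cong \mathrm{Sym}^1$ sitting, up to the $\pm$ issue, compatibly with $\mathrm{Sym}^{m}$ via $\mathrm{Sym}^m \otimes (\mathrm{Sym}^m)^\vee \supset \mathrm{ad} \supset \rho\otimes\rho^\vee$-type constructions) would exhibit a nontrivial $k$-algebra map witnessing $D \hookrightarrow \M_2(k)$.

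\textbf{Main obstacle.} The hard part is the "only if" direction: making rigorous the passage from "$\mathrm{Sym}^{2r+1}\rho$ is $k$-defined" back to "$\rho$ is $k$-defined" (or directly to a contradiction with $D$ being division). The plethysm bookkeeping for the "if" direction is routine, but the "only if" argument needs care about which $\SL_2$-representations generate which others under tensor and $k$-descent, and about the precise role of the center $\{\pm I\}$ — the parity of $m$ must enter exactly here, since it is the only place where odd and even $m$ genuinely differ at the level of the group $\SL_1(D) \subset \SL_2$. I expect the author's proof to handle this via Proposition~\ref{direct summand def over k} applied to a cleverly chosen $k$-defined representation containing $\mathrm{Sym}^{2r+1}\rho$ alongside a copy of $\rho$, deriving a contradiction with Lemma~\ref{action on k^2}; alternatively via a cohomological obstruction in $H^1(k, \mathrm{PGL}_2)$ or $H^1(k,\mathbf{SL}_{1,D})$ detecting the class of $D$.
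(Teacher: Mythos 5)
Your ``if'' direction is essentially sound, though it takes a slightly different route from the paper: you realize $\mathrm{Sym}^{2r}\rho$ as the multiplicity-one top summand of the plethysm $\mathrm{Sym}^r(\mathrm{Sym}^2(\bar k^2))$, whose $k$-structure $\mathrm{Sym}^r(D^0)$ is manifest, and then apply Proposition~\ref{direct summand def over k}. That works (the weight-$2r$ space of $\mathrm{Sym}^r(\mathrm{Sym}^2)$ is one-dimensional, so multiplicity one is immediate), and it replaces the paper's induction, which instead tensors two smaller even symmetric powers and extracts $\mathrm{Sym}^{2n}$ from the Clebsch--Gordan decomposition \eqref{CGFormula} via the same Proposition~\ref{direct summand def over k}.

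The ``only if'' direction, however, has a genuine gap, and you have correctly identified where it is. None of the routes you sketch goes through as written. The tensor-square idea fails outright: $\mathrm{Sym}^{2r+1}\otimes\mathrm{Sym}^{2r+1}=\bigoplus_i \mathrm{Sym}^{2(2r+1)-2i}$ contains only \emph{even} symmetric powers, so no amount of tensor operations on $\mathrm{Sym}^{2r+1}\rho$ with itself (or with other even representations alone) can ``recover $\rho$''; this is precisely the parity obstruction you are trying to detect, not a way around it. The algebra-theoretic sketch also breaks down: the $k$-span $B$ of $\mathrm{Sym}^m\rho(\SL_1(D))$ inside $\mathrm{End}_k(W)$ is a central simple algebra with $B\otimes_k\bar k=\M_{m+1}(\bar k)$ acting faithfully on the $(m+1)$-dimensional space $W$, which forces $B\cong\M_{m+1}(k)$ and yields no embedding of $D$ into $\M_2(k)$ and no contradiction; the claim that its center ``contains a twist of $D$'' is unsupported. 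The missing idea is the one-line descent the paper uses: if $\mathrm{Sym}^{2n+1}\rho$ were defined over $k$, then so is $\mathrm{Sym}^{2n+1}\rho\otimes\mathrm{Sym}^2\rho$ (the second factor being $k$-defined by Lemma~\ref{action on k^3}), and by \eqref{CGFormula} together with Proposition~\ref{direct summand def over k} its multiplicity-one summand $\mathrm{Sym}^{2n-1}\rho$ is then defined over $k$; iterating down in steps of two reaches $\mathrm{Sym}^1\rho=\rho$, contradicting Lemma~\ref{action on k^2}. The key point your proposal misses is that tensoring with the \emph{odd-dimensional, $k$-defined} representation $\mathrm{Sym}^2\rho$ preserves the parity of the highest weight while lowering it, which is exactly what lets one descend from $\mathrm{Sym}^{2n+1}$ all the way to $\mathrm{Sym}^1$.
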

\begin{proof}
By the highest weight theory, for every $\ell\in\Z_{\geq 0}$, we have up to isomorphism a unique irreducible representation $\mathrm{Sym}^{\ell}\rho$ of $\SL_2$ with highest weight $\ell$. If the representations $\mathrm{Sym}^r\rho$ and $\mathrm{Sym}^s\rho$ are both defined over $k$, then it is immediately seen that the tensor product representation ${\mathrm{Sym}}^{r}\rho\otimes_{\bar{k}}{\mathrm{Sym}}^{s}\rho$ is defined over $k$. The complete reducibility of this tensor product representation of $\mathbf{SL}_{1,D}(\bar{k})$ is explicitly given by the Clebsch-Gordan formula \cite[Prop. 5.5]{B-D}:
\begin{equation}\label{CGFormula}
  {\rm{Sym}}^{r}(\bar{k}^2)\otimes_{\bar{k}}{\rm{Sym}}^{s}(\bar{k}^2)=\bigoplus_{i=0}^{\min\{r,s\}}{\rm{Sym}}^{r+s-2i}(\bar{k}^2).  
\end{equation}
\noindent
Note that since $\dim_{\bar{k}}{\rm{Sym}}^{r+s-2i}(\bar{k}^2)=(r+s-2i)+1$, each irreducible subrepresentations has multiplicity one. 
Then, by Proposition \ref{direct summand def over k}, the representation of $\mathbf{SL}_{1,D}(\bar{k})$ on ${\rm{Sym}}^{r}(\bar{k}^2)\otimes_{\bar{k}}{\rm{Sym}}^{s}(\bar{k}^2)$ is defined over $k$ if and only if each irreducible subrepresentation $\mathrm{Sym}^{r+s-2i}\rho$, for $i\in\{0,1,\ldots,\mbox{min}\{r,s\}\}$, is defined over $k$.

\noindent
We first prove that when $m=2n$ is even, $\mathrm{Sym}^m\rho$ is defined over $k$. The strategy is to use induction on $n$. The trivial representation is vacuously defined over $k$. When $n=1$, Lemma \ref{action on k^3} upholds the result. Let $n > 1$. Assume that the representations $\mathrm{Sym}^{2i}\rho$ are all defined over $k$ for $i$ varying over the nonnegative integers up to $n-1$.  Choose nonnegative even integers $r,s$ such that $0< r,s<m$ and $r+s>m=2n$. Observe that $\mathrm{Sym}^m(\bar{k}^2)$ appears as a direct summand in \eqref{CGFormula} when $i=(r+s-m)/2$. By induction hypothesis, $\mathrm{Sym}^j(\bar{k}^2)$ is defined over $k$ for $j=r,s$ so that the tensor product on the left hand side is defined over $k$. Now, Proposition \ref{direct summand def over k} forces each irreducible representation on the right hand side of \eqref{CGFormula} to be defined over $k$. Hence, $\mathrm{Sym}^{m}\rho$ is defined over $k$. 

\noindent
To prove the only if part, it is enough to show that when $m=2n+1$ is an odd integer the representation $\mathrm{Sym}^m\rho$ is not defined over $k$. On the contrary, if possible, suppose that $\mathrm{Sym}^{m}\rho$ is defined over $k$, so that the tensor product representation $\mathrm{Sym}^{m}\rho\otimes_{\bar{k}}\mathrm{Sym}^2\rho$ is defined over $k$. The decomposition in \eqref{CGFormula} together with Proposition \ref{direct summand def over k} then implies that the representations $\mathrm{Sym}^{2n+3}\rho$ and $\mathrm{Sym}^{2n-1}\rho$ are both defined over k. Reiterating the above procedure, we eventually get that $\mathrm{Sym}^1\rho=\rho$ is defined over $k$.
This contradicts Lemma \ref{action on k^2}. 
Thus, $\mathrm{Sym}^{m}(\bar{k}^2)$ for $m$ odd, is not defined over $k$. Hence the proposition.
\end{proof}

\begin{rmk}
    We note here that the Clebsh-Gordon formula for decomposition of tensor product of representation is known only for $\SL_2$ and hence this limits the extension of the present methods to higher rank.

\end{rmk}

\section{Lattices in $\R^n\rtimes\SL_2(\R)$}\label{s3}

\noindent
In this section we detail on the discussions surrounding Theorem \ref{maintheorem} stated in the Introduction. Notation from the previous sections are used here without further definition. Let $\Gamma$ be a lattice in $\R^n\rtimes_\varphi\SL_2(\R)$, where $\varphi$ is not necessarily an irreducible representation of $\SL_2(\R)$ on $\R^n$. Let $\Gamma_1=\Gamma\cap\R^n$ and $\Gamma_2$ be the projection of $\Gamma$ on $\SL_2(\R)$. By \cite[Prop. 1.3]{TSW} both $\Gamma_1$ and $\Gamma_2$ are lattices. It is interesting to see that the very existence of the lattice $\Gamma$ necessitates $\Gamma_2$ to be an arithmetic subgroup of $\SL_2(\R)$.

\begin{proposition}\label{arithmetic-1}
 For $n\geq{2}$, let $\varphi$ be any nontrivial representation of $\SL_2(\R)$ on $\R^n$ and let $\Gamma\subset\R^n\rtimes_\varphi\SL_2(\R)$ be a lattice. Then, the lattice $\Gamma_2$ in $\SL_2(\R)$ is an arithmetic lattice.
\end{proposition}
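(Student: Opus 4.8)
The plan is to exploit a rigidity phenomenon: although $\SL_2(\R)$ has many non-arithmetic lattices, a lattice that in addition acts \emph{integrally} on $\R^n$ through $\varphi$ must be arithmetic. The mechanism is a Zariski-closure argument combined with the Borel density theorem and the Borel--Harish-Chandra theorem.

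First I would extract the integrality. Since $\R^n$ is normal in $\R^n\rtimes_\varphi\SL_2(\R)$, the group $\Gamma_1=\Gamma\cap\R^n$ is normal in $\Gamma$; being a lattice in the solvable group $\R^n$ it is uniform, hence a rank-$n$ free abelian group spanning $\R^n$. A direct computation shows that $\gamma=(v,g)\in\Gamma$ acts on $w\in\Gamma_1$ by conjugation as $w\mapsto\varphi(g)w$. As $\gamma$ runs over $\Gamma$ its $\SL_2(\R)$-component $g$ runs over $\Gamma_2$, so $\varphi(\Gamma_2)$ stabilises the lattice $\Gamma_1$. Fixing a $\Z$-basis of $\Gamma_1$ and conjugating $\varphi$ by the resulting change-of-basis matrix $g_0\in\GL_n(\R)$, we may assume $\varphi(\Gamma_2)\subset\GL_n(\Z)$. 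I would also note that $\varphi$, being a finite-dimensional representation of the connected semisimple group $\SL_2(\R)$, extends to an algebraic morphism $\mathbf{SL}_2\to\GL_n$, and that, as $\varphi$ is nontrivial, $\ker\varphi$ is a finite central subgroup of $\mathbf{SL}_2$.

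Next, let $\mathbf H$ be the Zariski closure of $\varphi(\Gamma_2)$ in $\GL_n$. Because $\varphi(\Gamma_2)\subset\GL_n(\Q)$, the subvariety $\mathbf H$ is stable under $\mathrm{Gal}(\overline{\Q}/\Q)$, hence defined over $\Q$; being the closure of a subgroup it is an algebraic $\Q$-subgroup. By the Borel density theorem the lattice $\Gamma_2$ is Zariski dense in $\mathbf{SL}_2$, and since a morphism of algebraic groups has closed image, $\mathbf H=\varphi(\mathbf{SL}_2)$. Hence $\mathbf H$ is a connected semisimple $\Q$-group of type $A_1$, isogenous to $\mathbf{SL}_2$, and $\mathbf H(\R)^\circ=\varphi(\SL_2(\R))$ is a noncompact real form of type $A_1$ (either $\SL_2(\R)$ or $\mathbf{PSL}_2(\R)$).

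Finally I would conclude. As $\varphi|_{\Gamma_2}$ has finite kernel and $\varphi(\SL_2(\R))=\mathbf H(\R)^\circ$ is closed of finite index in $\mathbf H(\R)$, the image $\varphi(\Gamma_2)$ is a lattice in $\mathbf H(\R)$. But $\varphi(\Gamma_2)\subset\mathbf H\cap\GL_n(\Z)=\mathbf H(\Z)$, and $\mathbf H(\Z)$ is a lattice in $\mathbf H(\R)$ by Borel--Harish-Chandra; since a lattice contained in a lattice has finite index, $\varphi(\Gamma_2)$ is commensurable with $\mathbf H(\Z)$, i.e.\ an arithmetic subgroup of $\mathbf H(\R)$. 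Transporting this back through the conjugation by $g_0$ and the isomorphism $\SL_2(\R)/\ker\varphi\cong\mathbf H(\R)^\circ$, Definition \ref{arithmeticgroupsdef} yields that $\Gamma_2$ is an arithmetic lattice in $\SL_2(\R)$. The conceptual work is all in the first two steps; the step I expect to need the most care is this last matching with Definition \ref{arithmeticgroupsdef} when $\varphi$ is not faithful, in which case $\varphi$ factors through $\mathbf{PSL}_2$ and $\mathbf H(\R)$ may have two components, so one has to pass to quotients by the compact central subgroup $\{\pm I\}$ and to finite-index subgroups --- routine, but it should be spelled out.
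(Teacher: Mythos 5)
Your proposal is correct and follows essentially the same route as the paper's proof: both exploit that $\Gamma_2$ stabilises the lattice $\Gamma_1$, conjugate it into the integer points, pass to the Zariski closure of the image to obtain a $\Q$-form of (an isogenous image of) $\mathbf{SL}_2$ via Borel density, and then treat the non-faithful case by quotienting by $\{\pm I\}$ and comparing with the simply connected cover. Your explicit appeal to Borel--Harish-Chandra to get commensurability with $\mathbf H(\Z)$ just makes precise the step the paper leaves as ``it follows that $\Gamma_2$ is arithmetic.''
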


\begin{proof}
  Suppose the representation $\varphi:\SL_2(\R)\rightarrow\SL_{n}(\R)$ is faithful. Since $\Gamma_1$ is a normal subgroup in $\Gamma$, $\Gamma_2$ is contained in $\Lambda\colonequals\{g\in\SL_2(\R): \varphi(g)\Gamma_1=\Gamma_1\}$ which is easily seen to be an arithmetic subgroup of $\SL_2(\R)$. Indeed, $\varphi(\Lambda)=\varphi(\SL_2(\R))\cap{a\SL_{n}(\Z)a^{-1}}$ for some $a\in\GL_{n}(\R)$ which conjugates $\Gamma_1$ to the lattice $\Z^n$ given by the standard $\Q$-structure in $\R^{n}$. Since, $\varphi(\Gamma_2)$ is a lattice in $\varphi(\Lambda)$, this implies that the Zariski closure of $\varphi(\Lambda)$ determines an algebraic subgroup $G$ defined over $\Q$ of $\mathbf{SL}_n$ such that, $\varphi(\Gamma_2)\subset G(\Q)$ and $G(\R)=\SL_2(\R)$. It follows that $\Gamma_2$ is an  arithmetic subgroup of $\SL_2(\R)$.

\noindent
 Now, given any nontrivial representation $\varphi$, either $\varphi$ is faithful or $\mathrm{Ker}(\varphi)=\{\pm{\mathrm{I}}\}$ which is a compact subgroup of $\SL_2(\R)$. The latter case gives a faithful representation of $\mathrm{PSL}_2(\R)$ on $\R^{n}$. In the light of the arguments in the preceding paragraph, we obtain an algebraic group $G$ defined over $\Q$ such that $G(\R) = \mathrm{PSL}_2(\R)$ and the image of $\Gamma_2$ in $\mathrm{PSL}_2(\R)$ is arithmetic subgroup with respect to this $\Q$-structure. Consider the simply connected cover $\widetilde{G}$ of $G$. Since $G$ is a $\Q$-form of $\mathbf{PSL}_2$, we know that its (algebraically) simply connected cover will be a $\Q$-form of the group $\mathbf{SL}_2$. Since $\widetilde{G}\rightarrow G$ is an isogeny we get that $\Gamma_2$ is an arithmetic subgroup of $\SL_2(\R)$ with respect to the $\Q$-structure given by $\widetilde{G}$.
\end{proof}

\begin{rmk}\label{existenceofQ-forminSL_2}
    A careful reading of the proof of Proposition \ref{arithmetic-1} reveals that regardless of the parity of $n$, if $\Gamma\subset\R^n\rtimes_\varphi\SL_2(\R)$ is a lattice, we get a $\Q$-form of $\SL_2(\R)$ determined by an algebraic subgroup $G$ of $\mathbf{SL}_n$ defined over $\Q$, which contains the image of $\Gamma_2$ in $\SL_n(\R)$. We emphasize here that the existence of such a $\Q$-form of $\SL_2(\R)$ depends only on the fact that $\Gamma_2$ stabilizes a lattice $\Gamma_1$ in $\R^n$ and is irrespective of the action being irreducible or not. 
\end{rmk}
\begin{proposition}\label{D_definedover_Q}
    If a lattice $\Gamma_2$ in $\SL_2(\R)$ stabilizes a lattice $\Gamma_1$ in $\R^n$ for some $n$, then $\Gamma_2$ is an arithmetic lattice in $\SL_2(\R)$ determined by a quaternion central simple algebra defined over $\Q$. 
\end{proposition}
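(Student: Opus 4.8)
The plan is to combine Proposition \ref{arithmetic-1} with the classification of $\Q$-forms of $\mathbf{SL}_2$; the argument is short once that classification is invoked. \emph{Step 1 (extract a $\Q$-form).} The hypothesis places us exactly in the setting of Proposition \ref{arithmetic-1}, with $\Gamma_1\subset\R^n$ the stabilized lattice and $\varphi$ the given action. By that proposition, together with Remark \ref{existenceofQ-forminSL_2}, we obtain a connected semisimple algebraic group $G$ defined over $\Q$ — sitting inside $\mathbf{SL}_n$ when $\varphi$ is faithful, or its simply connected cover $\widetilde G$ when $\mathrm{Ker}(\varphi)=\{\pm\mathrm{I}\}$ — with $G(\R)\cong\SL_2(\R)$ and such that $\Gamma_2$ is an arithmetic subgroup of $\SL_2(\R)$ for the $\Q$-structure given by $G$; that is, after conjugating inside $\SL_2(\R)$, $\Gamma_2$ is commensurable with $G(\Z)$.

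\emph{Step 2 ($G$ is a simply connected absolutely almost simple $\Q$-form of $\mathbf{SL}_2$).} Since $G(\R)\cong\SL_2(\R)$ is a $3$-dimensional real Lie group, $\dim G=3$; over $\bar{\Q}$ the group $G$ is a product of absolutely simple factors, each of dimension at least $3$, so $G_{\bar{\Q}}$ is absolutely simple of type $A_1$ and $G$ is absolutely almost simple over $\Q$. In particular $G$ is \emph{not} a nontrivial Weil restriction $\mathrm{Res}_{k/\Q}(\cdot)$, since that would force $\dim G=3\,[k:\Q]>3$ and introduce compact factors into $G(\R)$ — this is precisely how arithmetic lattices of $\SL_2(\R)$ over fields larger than $\Q$ arise. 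Moreover $G$ is simply connected: the only other $\C$-isomorphism class of type $A_1$ is the adjoint group $\mathbf{PGL}_2$, whose real forms are $\mathrm{PGL}_2(\R)$ (disconnected, with trivial centre) and a compact group, neither of which is isomorphic to $\SL_2(\R)$, which is connected, non-compact, with centre of order $2$. In the case $\mathrm{Ker}(\varphi)=\{\pm\mathrm{I}\}$ one argues directly with $\widetilde G$, already recorded in Proposition \ref{arithmetic-1} as a $\Q$-form of $\mathbf{SL}_2$.

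\emph{Step 3 (identify $G$ by a quaternion algebra).} A simply connected absolutely almost simple $\Q$-group of type $A_1$ is an inner form, since the Dynkin diagram $A_1$ has no nontrivial automorphisms; hence it is of the form $\mathbf{SL}_{1,A}$ for a quaternion central simple algebra $A$ over $\Q$, uniquely determined up to isomorphism (see §\ref{lattice-in-SL2} and \cite[Section 2.3.1]{P-R}). Since $G(\R)\cong\SL_2(\R)$ is non-compact, $A$ cannot be division at the real place, so $A\otimes_\Q\R\cong\M_2(\R)$. Therefore $G(\Z)$ is commensurable with $\SL_1(\mathcal{O}_A)$ for a maximal order $\mathcal{O}_A\subset A$, and by Step 1 the lattice $\Gamma_2$ is, up to conjugation in $\SL_2(\R)$ and commensurability, equal to $\SL_1(\mathcal{O}_A)$. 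Thus $\Gamma_2$ is an arithmetic lattice of $\SL_2(\R)$ determined by a quaternion central simple algebra $A$ defined over $\Q$, as claimed.

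The only genuinely non-formal point is Step 2: one has to be sure that the $\Q$-form produced by Proposition \ref{arithmetic-1} is absolutely simple over $\Q$ — rather than a restriction of scalars $\mathrm{Res}_{k/\Q}\mathbf{SL}_{1,D}$ from a nontrivial totally real $k$, which is the usual way cocompact arithmetic lattices of $\SL_2(\R)$ present themselves — and that it is the simply connected and not the adjoint form. Both are settled by the dimension count together with elementary bookkeeping of the centre and of the connected components of the real points; once that is in place, Step 3 is a direct appeal to the classification of groups of type $A_1$ over $\Q$.
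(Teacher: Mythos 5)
Your proof is correct and follows the same route as the paper's: extract a $\Q$-form $G$ with $G(\R)\cong\SL_2(\R)$ from Proposition \ref{arithmetic-1} and Remark \ref{existenceofQ-forminSL_2}, then invoke the classification of $\Q$-forms of $\mathbf{SL}_2$ by quaternion algebras over $\Q$. The paper compresses your Steps 2 and 3 into the single assertion that ``the $\Q$-forms of $\SL_2(\R)$ are precisely $\mathbf{SL}_2$ and $\mathbf{SL}_{1,D}$''; your dimension count ruling out nontrivial Weil restrictions, and the bookkeeping distinguishing the simply connected from the adjoint form, are exactly what that assertion leaves implicit.
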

\begin{proof}
   The proof of Proposition \ref{arithmetic-1}, and Remark \ref{existenceofQ-forminSL_2} yields that $\Gamma_2$ is an arithmetic lattice with respect to a $\Q$-form $G$ of $\SL_2(\R)$. Since the $\Q$-forms of $\SL_2(\R)$ are precisely $\mathbf{SL}_2$ and $\mathbf{SL}_{1,D}$ for quaternion division algebras $D$ defined over $\Q$, we conclude that $G=\mathbf{SL}_{1,D}$ or $\mathbf{SL}_2$. 
\end{proof}
\begin{proposition}\label{ccpt=def|Q}
  If $\Gamma\subset \R^n\rtimes_\varphi\SL_2(\R)$ is a cocompact lattice, then the nontrivial representation $\varphi$ of $\SL_2(\R)$ on $\R^n$ is defined over $\Q$.
\end{proposition}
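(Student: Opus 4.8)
The plan is to extract the $\Q$-structure on the representation space directly from $\Gamma_1$ and then identify $\varphi$ with a closed immersion of algebraic $\Q$-groups. First I would normalize the situation: since $\Gamma_1$ is a lattice in $\R^n$, choose $a\in\GL_n(\R)$ with $a\Gamma_1=\Z^n$, and replace $\varphi$ by $g\mapsto a\varphi(g)a^{-1}$; this only changes the $\Q$-structure on $\R^n$ (with respect to which ``defined over $\Q$'' is being asserted), so we may assume $\Gamma_1=\Z^n$. Then $\R^n$ carries the standard $\Q$-structure, and $\varphi(\Gamma_2)\subseteq\SL_n(\Z)$ because $\Gamma_2$ stabilizes $\Gamma_1$ and $\varphi$ has image in $\SL_n$. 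Moreover, since $\Gamma$ is cocompact and the projection $\R^n\rtimes_\varphi\SL_2(\R)\to\SL_2(\R)$ is continuous, surjective, and sends $\Gamma$ onto $\Gamma_2$, the quotient $\Gamma_2\backslash\SL_2(\R)$ is compact, i.e. $\Gamma_2$ is a cocompact lattice in $\SL_2(\R)$.

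Next I would run the argument already used in the proof of Proposition \ref{arithmetic-1} (and Remark \ref{existenceofQ-forminSL_2}): let $G$ be the Zariski closure of $\varphi(\Gamma_2)$ in $\mathbf{SL}_n$. Since $\varphi(\Gamma_2)\subseteq\SL_n(\Q)$ is stable under $\mathrm{Gal}(\bar\Q/\Q)$, so is $G$, hence $G$ is defined over $\Q$; and by the Borel density theorem $\Gamma_2$ is Zariski dense in $\SL_2(\R)$, so $G(\R)^\circ=\varphi(\SL_2(\R))$ (in fact $G(\R)=\varphi(\SL_2(\R))$ when $\varphi$ is faithful, because then $\varphi$ extends to a faithful morphism on $\SL_2(\C)$ whose image has connected non-compact real points). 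Thus $G^\circ$, or equivalently its simply connected cover, is a $\Q$-form of $\mathbf{SL}_2$ (or of $\mathbf{PSL}_2$). The new and essential observation is that the inclusion $G^\circ\hookrightarrow\mathbf{SL}_n$ is a closed immersion \emph{defined over $\Q$}; this is precisely the assertion that $\varphi$ is the real realization of an algebraic representation over $\Q$ --- the point being that although not every representation of $\mathbf{SL}_{1,D}(\R)$ descends to $\Q$ (cf. Lemma \ref{action on k^2}), the one coming from $\Gamma$ does, since it literally is the $\Q$-embedding of the Zariski closure of $\varphi(\Gamma_2)$.

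Finally I would apply Proposition \ref{D_definedover_Q}: because $\Gamma_2$ is an arithmetic \emph{cocompact} lattice stabilizing $\Z^n$, the $\Q$-form attached to it is $\mathbf{SL}_{1,D}$ for a quaternion \emph{division} algebra $D$ over $\Q$, necessarily split at the real place (otherwise $\mathbf{SL}_{1,D}(\R)=\mathrm{SU}(2)$ would be compact and could not contain the lattice $\Gamma_2$ of $\SL_2(\R)$). If $\varphi$ is faithful, then $G=G^\circ\cong\mathbf{SL}_{1,D}$ and the $\Q$-closed immersion $\mathbf{SL}_{1,D}\hookrightarrow\mathbf{SL}_n$ induces on real points exactly $\varphi\colon\SL_2(\R)\xrightarrow{\sim}\mathbf{SL}_{1,D}(\R)\hookrightarrow\SL_n(\R)$, so $\varphi$ is defined over $\Q$. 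If $\mathrm{Ker}(\varphi)=\{\pm{\mathrm{I}}\}$, I would instead pass to the simply connected cover $\widetilde{G^\circ}\to G^\circ$, which is defined over $\Q$ and of the form $\mathbf{SL}_{1,D}$ by Proposition \ref{D_definedover_Q}; its real points are all of $\SL_2(\R)$ since $D$ is split at the real place, and the composite $\widetilde{G^\circ}\to G^\circ\hookrightarrow\mathbf{SL}_n$ gives on real points the map $\SL_2(\R)\twoheadrightarrow\mathrm{PSL}_2(\R)=G^\circ(\R)\hookrightarrow\SL_n(\R)$, which is $\varphi$. I expect the main obstacle to be bookkeeping rather than a genuine difficulty: one must check that the Zariski closure is defined over $\Q$ with $G(\R)^\circ=\varphi(\SL_2(\R))$ and that the resulting closed immersion is a $\Q$-morphism realizing $\varphi$ --- most of this is contained in Proposition \ref{arithmetic-1} --- and, in the non-faithful case, that the simply connected cover really has $\SL_2(\R)$ as its real points.
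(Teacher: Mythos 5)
Your proposal is correct, and it rests on the same three pillars as the paper's own proof: cocompactness of $\Gamma_2$ together with Proposition \ref{D_definedover_Q} pins the $\Q$-form down to $\mathbf{SL}_{1,D}$ for a quaternion division algebra $D$ over $\Q$; the normalization $\Gamma_1=\Z^n$ gives $\varphi(\Gamma_2)\subseteq\SL_n(\Z)$; and the Borel density theorem supplies Zariski density of $\Gamma_2$. Where you genuinely diverge is in how the Galois descent is executed. The paper applies $\sigma\in\mathrm{Gal}(F/\Q)$ directly to the coefficients of the polynomials $f$ defining $\varphi$, and uses Zariski density together with $f(p)\in\Z$ for $p\in\Gamma_2$ to force $\sigma(f)=f$ on the nonempty open sets $U_\sigma$, hence $F=\Q$. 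You instead take the Zariski closure $G$ of $\varphi(\Gamma_2)$, note that the closure of a set of $\Q$-rational points is automatically a $\Q$-subgroup of $\mathbf{SL}_n$, and then recover $\varphi$ as the real realization of the $\Q$-morphism $\widetilde{G}\to G\hookrightarrow\mathbf{SL}_n$. Your packaging buys a cleaner descent step --- in particular it is indifferent to whether the coefficients of $f$ are algebraic over $\Q$, a point on which the paper's argument is terse (its use of $\mathrm{Gal}(F/\Q)$ tacitly treats $F/\Q$ as a Galois extension) --- but it pays for this with the identification bookkeeping at the end: one must check that the lift of $\varphi$ through the central isogeny $\widetilde{G}\to G$ exists over $\R$ and is an isomorphism $\mathbf{SL}_2\to\widetilde{G}$ of real algebraic groups, so that under this identification $\varphi$ really is the real-points map of a $\Q$-morphism. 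That check is routine (a morphism from a simply connected group lifts uniquely through a central isogeny over the base field), and you flag it explicitly, so I see no gap.
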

\begin{proof}

\noindent
 Let $\Gamma_2$ be the projection of $\Gamma$ on $\SL_2(\R)$. Since $\Gamma_2$ is a cocompact arithmetic lattice in $\SL_2(\R)$, by Proposition \ref{D_definedover_Q} $\Gamma_2=\SL_1(\mathcal{O}_D)$ for some quaternion central division algebra $D$ defined over $\Q$. Being a lattice, $\Gamma_2$ is Zariski dense in $\SL_2(\R)$ by Borel density theorem. If $F=\Q(c_1,\ldots,c_r)$ is an extension of $\Q$ by the coefficients of the polynomials $f$ defining $\varphi$, then for any $\sigma\in\mathrm{Gal}(F/\Q)$, $\Gamma_2$ intersects the Zariski open set $U_{\sigma}\colonequals\{p\in\SL_2(\R):\sigma{(f)}(p)\neq{f(p)}\}$ nontrivially if $U_\sigma\neq\emptyset$. This gives a contradiction as $f(p)\in\Z$ for all $p\in\Gamma_2$. Thus, $U_\sigma=\emptyset$ so that $F=\Q$. 
\end{proof}

\begin{lemma}\label{def|Q=ccpt}
    For a quaternion division algebra $D$ defined over $\Q$, if a representation $\varphi:\mathbf{SL}_{1,D}(\R)=\SL_2(\R)\rightarrow\SL_n(\R)$ is defined over $\Q$, then $\R^n\rtimes_\varphi\SL_2(\R)$ has a cocompact lattice.
\end{lemma}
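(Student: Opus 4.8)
The plan is to build the cocompact lattice in $\R^n\rtimes_\varphi\SL_2(\R)$ directly from an arithmetic cocompact lattice in $\SL_2(\R)$ that already sits inside $\mathbf{SL}_{1,D}(\mathcal O_k)$ for an appropriate totally real field $k$, together with a $\mathbf{SL}_{1,D}$-stable lattice in the representation space. First I would set up the arithmetic data: since $D$ is a quaternion division algebra over $\Q$, choose a totally real field $k$ over which one can find (by \cite[Section 18.5]{DWM} and the discussion in Section~\ref{lattice-in-SL2}) a quaternion division algebra $D'$ with $\mathbf{SL}_{1,D'}$ a $k$-form of $\SL_2$ that is split at exactly one real place of $k$ and ramified at all the others; the simplest choice is to keep $k=\Q$ itself and $D'=D$ when $D$ is ramified at $\R$-related finite places but split at $\R$ — but $D$ division over $\Q$ forces $D$ to be ramified at $\R$, so instead one must pass to a real quadratic (or higher) extension where $D$ splits at exactly one archimedean place. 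In any case, $\mathbf{SL}_{1,D'}(\mathcal O_k)$ maps, modulo the compact $\mathrm{SU}(2)$-factors in \eqref{eqn1}, onto a cocompact arithmetic lattice $\Delta$ of $\SL_2(\R)$.

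The key step is to promote the $\Q$-defined representation $\varphi$ of $\SL_2(\R)$ to a $k$-defined representation of $\mathbf{SL}_{1,D'}$. Because $\varphi$ is defined over $\Q$, it is in particular defined over $k$ as an abstract algebraic representation of $\mathbf{SL}_2$, and by highest-weight theory it decomposes over $\bar k$ into symmetric powers $\mathrm{Sym}^{m_j}\rho$; since $\varphi$ is already defined over $\Q$ the multiplicity of each even-dimensional irreducible (odd $m_j$) must be even, so one can pair them up and, using Lemma~\ref{action on k^3}, Proposition~\ref{def|k} and Proposition~\ref{direct summand def over k}, realize each summand (and hence $\varphi$) as a representation of $\mathbf{SL}_{1,D'}$ defined over $k$ on some $k$-vector space $\V_k$ with $\V_k\otimes_k\R\cong\R^n$ as $\SL_2(\R)$-modules (compatibly with $\varphi$). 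Then $\mathrm{Res}_{k/\Q}(\V_k\rtimes_\varphi\mathbf{SL}_{1,D'})$ is an algebraic group defined over $\Q$, and its $\Z$-points $\Lambda=\mathcal O_{\V_k}\rtimes\mathbf{SL}_{1,D'}(\mathcal O_k)$ (for a $\mathbf{SL}_{1,D'}(\mathcal O_k)$-stable $\mathcal O_k$-lattice $\mathcal O_{\V_k}$ in $\V_k$, which exists by standard integrality) form an arithmetic subgroup of $\prod_\sigma(\V_k)_\sigma\rtimes\prod_\sigma\mathbf{SL}_{1,D'}^\sigma(\R)$.

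Finally I would check cocompactness. The real points of $\mathrm{Res}_{k/\Q}\mathbf{SL}_{1,D'}$ split as $\SL_2(\R)\times\mathrm{SU}(2)^{m-1}$ as in \eqref{eqn1}, and the $\V$-part contributes $\R^n\times(\text{compact tori coming from the ramified places})$, since at a place where $D'$ ramifies the associated real form of $\mathbf{SL}_{1,D'}$ is compact, so its orbit closure on the corresponding real vector space is compact — more precisely $\V_\sigma/(\V_k)_\sigma$-lattice is a compact torus and the semidirect product with a compact group is compact. Thus modding $\Lambda$ by the compact normal factors (the $\mathrm{SU}(2)$'s and the extra tori) yields, via Definition~\ref{arithmeticgroupsdef}, an arithmetic — hence by Godement's criterion applied to the division algebra $D'$, a \emph{cocompact} — lattice in $\R^n\rtimes_\varphi\SL_2(\R)$: the absence of nontrivial unipotents in $\mathbf{SL}_{1,D'}(\mathcal O_k)$ (as $D'$ is a division algebra) together with the fact that the $\R^n$-direction is handled by the cocompact lattice $\mathcal O_{\V_k}$ in $\R^n$ gives compactness of the quotient. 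The main obstacle I expect is the bookkeeping in the second step: ensuring that the $k$-structure on $\V_k$ can be chosen so that (i) the $\mathbf{SL}_{1,D'}(\mathcal O_k)$-action preserves an $\mathcal O_k$-lattice and (ii) after restriction of scalars and quotienting by compact factors one recovers \emph{exactly} $\R^n\rtimes_\varphi\SL_2(\R)$ with the given $\varphi$ — this requires matching up the archimedean completion of the $k$-representation at the split place with $\varphi$ itself, which uses that $\varphi$ was defined over $\Q$ to begin with and that pairing of odd symmetric powers is compatible with the $D'$-structure.
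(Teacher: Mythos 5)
Your proposal goes wrong at the very first step, and the error propagates. You assert that ``$D$ division over $\Q$ forces $D$ to be ramified at $\R$,'' but this is false: by the Albert--Brauer--Hasse--Noether theorem a quaternion algebra over $\Q$ is a division algebra as soon as its (nonempty, even-cardinality) set of ramified places is nonempty, and that set may consist of two finite primes, in which case $D\otimes_\Q\R\cong\M_2(\R)$. Indeed the hypothesis of the lemma, $\mathbf{SL}_{1,D}(\R)=\SL_2(\R)$, already tells you that $D$ is split at the real place, so $\SL_1(\mathcal{O}_D)$ is itself a cocompact lattice in $\SL_2(\R)$ (Godement: no nontrivial unipotents in a division algebra), with no restriction of scalars required. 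Your detour through a totally real extension $k$, a new algebra $D'$ and $\mathrm{Res}_{k/\Q}$ is therefore both unnecessary and genuinely problematic: at the ramified places of $D'$ the additive part of $\mathrm{Res}_{k/\Q}(\V_k\rtimes\mathbf{SL}_{1,D'})(\R)$ contributes noncompact factors isomorphic to $\R^n$ acted on by a compact group, and a real vector space is not a compact normal subgroup merely because the lattice quotient $\V_\sigma/(\mathcal{O}_{\V_k})_\sigma$ is a torus, so you cannot ``mod out the extra tori'' to land in $\R^n\rtimes_\varphi\SL_2(\R)$ as you propose. The appeal to evenness of multiplicities of the even-dimensional irreducibles is likewise not an input here; that is the content of the later Proposition \ref{for reducible action def|Q=even multiplicity}, which itself relies on this lemma's surroundings.

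The intended argument is much shorter. Since $\varphi:\mathbf{SL}_{1,D}\rightarrow\mathbf{SL}_n$ is a morphism of algebraic groups defined over $\Q$, Lemma 4.1 of \cite{P-R} produces a finite-index subgroup $H$ of the arithmetic group $\SL_1(\mathcal{O}_D)$ with $\varphi(H)\subseteq\SL_n(\Z)$ with respect to a suitable basis of $\R^n$; then $H$ preserves the lattice $\Gamma_1\cong\Z^n$, and $\Gamma_1\rtimes H$ is a lattice in $\R^n\rtimes_\varphi\SL_2(\R)$, cocompact because $\Z^n$ is cocompact in $\R^n$ and $H$ is of finite index in the cocompact lattice $\SL_1(\mathcal{O}_D)$ of $\SL_2(\R)$.
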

\begin{proof}
Lemma 4.1 of \cite{P-R} shows the existence of a finite index subgroup $H$ of $\SL_1(\mathcal{O}_D)$ when $\varphi$ is defined over $\Q$, such that $\varphi(\mathrm{H})\subset\SL_n(\Z)$ with respect to an $\R$-basis $\{v_1,\ldots,v_n\}$ of $\R^n$. That is, $H$ stabilizes an arithmetic lattice $\Gamma_1$ in $\R^n$. Hence, there exists a cocompact lattice in $\R^n\rtimes_\varphi{\SL_2(\R)}$ projecting onto $\SL_1(\mathcal{O}_D)$ in $\SL_2(\Z)$. 
\end{proof} 
\noindent
At this point we need to be mindful about the parity of $n$ when the action of $\SL_2(\R)$ on $\R^n$ is irreducible. The necessary condition stated in the Proposition \ref{ccpt=def|Q} does not hold true when $n$ is an even integer. In Lemma \ref{on R^2n not def|Q}, $D$ continues to be a quaternion division algebra defined over $\Q$ and split over $\R$. We let $G=\mathbf{SL}_{1,D}$ so that $G(\R)=\SL_2(\R)$.
\begin{lemma}\label{on R^2n not def|Q}
    For $n\geq{2}$, the irreducible representation of $G(\R)$ on $\R^n$ is defined over $\Q$ if and only if $n$ is odd.
\end{lemma}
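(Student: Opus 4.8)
The plan is to obtain this lemma as the specialization of Proposition \ref{def|k} to the base field $k=\Q$, the only extra work being to match the algebraic formulation of that proposition with the Lie-theoretic formulation here.

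First I would check that Proposition \ref{def|k} applies with $k=\Q$: the field $\Q$ is totally real and has a single archimedean place $\Q\hookrightarrow\R$, and the standing hypothesis that $D$ is a quaternion division algebra over $\Q$ split over $\R$ is exactly the statement that $D$ is split at that one real place. Thus $D\otimes_\Q\R\cong\M_2(\R)$, so $G\times_\Q\R$ is the split real form with $G(\R)=\SL_2(\R)$, and $\rho$ together with all its symmetric powers $\mathrm{Sym}^m\rho$ are defined over $\R$; Proposition \ref{def|k} identifies which of these descend to $\Q$, namely $\mathrm{Sym}^m\rho$ for $m$ even.

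Next I would identify the representation appearing in the statement. By the highest weight theory, $\mathbf{SL}_{1,D}$ has, up to isomorphism, a unique absolutely irreducible algebraic representation of each dimension $m+1$, namely $\mathrm{Sym}^m\rho$; hence $G(\R)=\SL_2(\R)$ has a unique irreducible representation on an $n$-dimensional real vector space, namely the real form of $\mathrm{Sym}^{n-1}\rho$, and its complexification is $\mathrm{Sym}^{n-1}\rho$ over $\C$. Because this representation is absolutely irreducible and $G(\R)$ is Zariski dense in $G$, saying that $\varphi$ is defined over $\Q$ (that is, $\varphi$ is the restriction to $\R$-points of a $\Q$-morphism $G\to\mathbf{SL}_n$ for some $\Q$-structure on $\R^n$) is equivalent to saying that the algebraic representation $\mathrm{Sym}^{n-1}\rho$ of $G$ is defined over $\Q$ in the sense of Proposition \ref{def|k}. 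Taking $m=n-1$ in Proposition \ref{def|k} then shows $\varphi$ is defined over $\Q$ if and only if $n-1$ is even, i.e.\ if and only if $n$ is odd, which is the assertion. I do not foresee a real obstacle here; the only point demanding care is the equivalence just described, which is routine given absolute irreducibility and Zariski density.
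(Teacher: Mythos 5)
Your proposal is correct and follows essentially the same route as the paper: the paper's proof likewise reduces the statement to Proposition \ref{def|k} with $k=\Q$, noting that the real irreducible representation restricts to the algebraic representation of $G(\bar{\Q})$ on $\mathrm{Sym}^{n-1}(\bar{\Q}^2)$, which descends to $\Q$ exactly when $n$ is odd. Your extra care in justifying the equivalence via absolute irreducibility and Zariski density only makes explicit what the paper leaves implicit.
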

\begin{proof}
   This result follows from Proposition \ref{def|k} if we restrict the representation defined over the completion $\R$ to $\Q$ and see that the induced irreducible representation of $G(\bar{\Q})$ on $\mathrm{Sym}^{n-1}(\bar{\Q}^2)$, which is a priori defined over the algebraic closure $\bar{\Q}$, is defined over $\Q$ if and only if $n$ is odd.
   
\end{proof}

\section{Proof of main theorems}\label{s4}
\noindent
The goal of this section is to prove Theorem \ref{maintheorem} and Theorem \ref{secondmaintheorem} stated in the Introduction. With the detailed analysis on $\Q$-structures done in the preceding section, we need only combine the relevant results to prove Theorem \ref{maintheorem}. 

\subsection{Proof of Theorem \ref{maintheorem}}

\begin{theorem}[Part (1), Theorem \ref{maintheorem}]
\label{ccpt=n odd}
   For $n\geq{2}$, and the irreducible action of $\SL_2(\R)$ on $\R^n \ (\simeq{\rm{Sym}}^{n-1}(\R^2))$, the group $\R^n\rtimes\SL_2(\R)$ contains a cocompact lattice if and only if $n$ is odd. Moreover, the cocompact lattices are of the form $\Z^n\rtimes\SL_1(\mathcal O_D)$ upto conjugates and commensurability, where $D$ is a quaternion division algebra over a totally real number field $k$.
\end{theorem}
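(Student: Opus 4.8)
The plan is to prove the two implications of the equivalence separately using the results of \S\ref{s3}, and then to identify the commensurability class. For the \emph{existence} of cocompact lattices when $n$ is odd, I would first fix a quaternion division algebra $D$ over $\Q$ that is split at the archimedean place (such a $D$ exists, e.g.\ one ramified at exactly two finite primes, so that $D\otimes_\Q\R\cong\M_2(\R)$ and hence $\mathbf{SL}_{1,D}(\R)=\SL_2(\R)$). Since $n$ is odd, Lemma \ref{on R^2n not def|Q} says that the irreducible representation $\varphi$ of $\mathbf{SL}_{1,D}(\R)=\SL_2(\R)$ on $\R^n\simeq{\rm{Sym}}^{n-1}(\R^2)$ descends to a morphism of $\Q$-groups $\mathbf{SL}_{1,D}\to\mathbf{SL}_n$, and Lemma \ref{def|Q=ccpt} then produces a cocompact lattice in $\R^n\rtimes_\varphi\SL_2(\R)$; the lattice it produces is $\Z^n\rtimes H$ for a suitable finite-index $H\le\SL_1(\mathcal O_D)$, which lies in the commensurability class of $\Z^n\rtimes\SL_1(\mathcal O_D)$ and so also disposes of the ``moreover'' in this direction.

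For the converse, let $\Gamma\subset\R^n\rtimes_\varphi\SL_2(\R)$ be a cocompact lattice with $\varphi$ the irreducible action. Its projection $\Gamma_2$ is a cocompact arithmetic lattice in $\SL_2(\R)$, so by Proposition \ref{D_definedover_Q} it is, after conjugation and up to commensurability, $\SL_1(\mathcal O_D)$ for a quaternion division algebra $D$ over $\Q$ split over $\R$, and the $\Q$-form of $\SL_2(\R)$ carrying $\Gamma_2$ is $\mathbf{SL}_{1,D}$. By Proposition \ref{ccpt=def|Q} the representation $\varphi$ is defined over $\Q$, and (as its proof shows) with respect to exactly this $\Q$-structure, so $\varphi$ descends to a $\Q$-morphism $\mathbf{SL}_{1,D}\to\mathbf{SL}_n$. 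Lemma \ref{on R^2n not def|Q} --- which itself rests on Proposition \ref{def|k} --- then forces $n$ to be odd. Hence for $n$ even no cocompact lattice exists, and this completes the equivalence.

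It remains to pin down the commensurability class when $n$ is odd. After conjugating in $\R^n\rtimes\SL_2(\R)$ I may assume $\Gamma_1=\Gamma\cap\R^n=\Z^n$, so $\varphi(\Gamma_2)\subset\SL_n(\Z)$; passing to a finite-index subgroup of $\Gamma$ I may further assume (Selberg's lemma) that $\Gamma_2$ is a finite-index subgroup of $\SL_1(\mathcal O_D)$ equal to the fundamental group of a closed oriented surface of genus $\ge 2$. Then $\Gamma$ is an extension $1\to\Z^n\to\Gamma\to\Gamma_2\to 1$ with $\Gamma_2$ acting through $\varphi$, with class $c\in H^2(\Gamma_2,\Z^n)$. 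Since $\Gamma_2$ is a Poincar\'e duality group of dimension $2$, $H^2(\Gamma_2,\Q^n)\cong(\Q^n)_{\Gamma_2}=0$, the coinvariants vanishing because $\Gamma_2$ is Zariski dense in $\SL_2(\R)$ and $\Q^n$ is a nontrivial irreducible $\mathbf{SL}_{1,D}$-module; hence $c$ is torsion. If $N$ is its order, a short Bockstein argument --- writing $c=\delta(\bar c)$ for the connecting map of $0\to\Z^n\xrightarrow{N}\Z^n\to(\Z/N)^n\to 0$ (which hits all of the $N$-torsion since $\mathrm{cd}\,\Gamma_2=2$) and restricting to $K=\ker(\Gamma_2\to\GL_n(\Z/N))$, on which $\bar c$ is represented by an ordinary homomorphism --- shows that $c$ restricts to $0$ on a finite-index subgroup $\Gamma_2'\le\Gamma_2$. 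Then the preimage of $\Gamma_2'$ in $\Gamma$ is isomorphic to $\Z^n\rtimes\Gamma_2'$, has finite index in $\Gamma$, and is commensurable to $\Z^n\rtimes\SL_1(\mathcal O_D)$, which is the asserted form (with $k=\Q$, which is totally real).

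The two implications are immediate once the machinery of \S\ref{s3} is in hand; the real content is the last step, and I expect its crux to be the vanishing $H^2(\Gamma_2,\Q^n)=0$ --- resting on Poincar\'e duality for the cocompact Fuchsian group $\Gamma_2$ together with irreducibility of $\varphi$ --- and then the passage from this rational statement to an honest splitting of the extension over a finite-index subgroup. A secondary point one must check is that the $\Q$-structures furnished by Propositions \ref{arithmetic-1}, \ref{ccpt=def|Q} and \ref{D_definedover_Q} are mutually compatible, so that $\varphi$ genuinely descends to a $\Q$-morphism with domain $\mathbf{SL}_{1,D}$.
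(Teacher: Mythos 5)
Your proof of the equivalence itself follows the paper's route exactly: existence for odd $n$ via Lemma \ref{on R^2n not def|Q} plus Lemma \ref{def|Q=ccpt}, and the converse via Propositions \ref{D_definedover_Q} and \ref{ccpt=def|Q} combined with Lemma \ref{on R^2n not def|Q}; that part is correct and needs no further comment. Where you genuinely diverge is the ``moreover'' clause. The paper disposes of it by appealing to ``the arguments in Theorem \ref{lattices in R2.SL(2,R)}'', whose splitting step rests on the fact that finite-index subgroups of $\SL_2(\Z)$ are free, so that the extension $0\to\Gamma_1\to\pi^{-1}(\Lambda)\to\Lambda\to 1$ splits for free reasons. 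In the cocompact case $\Gamma_2$ is virtually a closed surface group of genus $\geq 2$, which is \emph{not} free, so that argument does not transfer verbatim; your replacement --- Selberg to make $\Gamma_2$ torsion-free, Poincar\'e duality to identify $H^2(\Gamma_2,\Q^n)$ with the coinvariants $(\Q^n)_{\Gamma_2}$, vanishing of the coinvariants from Zariski density and irreducibility, hence a torsion extension class, and then the Bockstein for $0\to\Z^n\xrightarrow{N}\Z^n\to(\Z/N)^n\to 0$ restricted to $K=\ker(\Gamma_2\to\GL_n(\Z/N))$ to kill the class on a finite-index subgroup --- is correct and in fact supplies a step the paper leaves implicit. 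What your approach buys is a complete justification of the virtual splitting in the cocompact case; what it costs is the extra cohomological machinery, which the paper's intended reading (commensurability only requires splitting over some finite-index subgroup) presumably meant to sidestep but does not actually carry out for non-free $\Gamma_2$. Your closing caveat about compatibility of the $\Q$-structures in Propositions \ref{arithmetic-1}, \ref{ccpt=def|Q} and \ref{D_definedover_Q} is well taken and is also implicit rather than explicit in the paper.
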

\begin{proof}
A $\Q$-structure on $\R^n$ invariant under an irreducible action of a $\Q$-structure $\mathbf{SL}_{1,D}$ on $\SL_2(\R)$ exists only for an odd dimensional representation is seen from Lemma \ref{on R^2n not def|Q}. The result now follows from Lemma \ref{def|Q=ccpt}. 

When $\Gamma$ is a cocompact lattice in $\R^n\rtimes\SL_2(\R)$, the projection $\Gamma_2$ of $\Gamma$ onto $\SL_2(\R)$ is a cocompact arithmetic lattice in $\SL_2(\R)$ and has the form $\SL_1(\mathcal O_D)$ for a quaternion division algebra $D$ over a totally real number field $k$. Then the structure of $\Gamma$ follows from the arguments in Theorem \ref{lattices in R2.SL(2,R)} below.
\end{proof}

\noindent
Note that for any $n\in\N\cup\{0\}$, $\R^n\rtimes\SL_2(\R)$ contains a {\it non-cocompact} lattice of the form $\Z^n\rtimes\SL_2(\Z)$. In the following theorem we prove that for an irreducible representation of $\SL_2(\R)$ of even dimension $n$, all lattices in $\R^n\rtimes\SL_2(\R)$ are of the form $\Z^n\rtimes\SL_2(\Z)$. This will complete the proof of Theorem \ref{maintheorem}.

\begin{theorem}[Part (2), Theorem \ref{maintheorem}]
\label{lattices in R2.SL(2,R)}
    For a positive even integer $n$ and the irreducible action of $\SL_2(\R)$ on $\R^n$, every lattice in $\R^n\rtimes\SL_2(\R)$ is isomorphic to $\Z^n\rtimes\SL_2(\Z)$ up to conjugates and commensurability.
\end{theorem}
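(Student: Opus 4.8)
The plan is to start from an arbitrary lattice $\Gamma \subset \R^n \rtimes \SL_2(\R)$ and use the structural results already established. By \cite[Prop. 1.3]{TSW} (or \cite[Cor. 8.28]{Rag-1}), $\Gamma_1 = \Gamma \cap \R^n$ is a lattice in $\R^n$ and $\Gamma_2$, the projection of $\Gamma$ onto $\SL_2(\R)$, is a lattice in $\SL_2(\R)$. By Proposition \ref{arithmetic-1}, $\Gamma_2$ is an arithmetic lattice in $\SL_2(\R)$; and by Proposition \ref{D_definedover_Q}, since $\Gamma_2$ stabilizes the lattice $\Gamma_1$ in $\R^n$, this arithmetic structure comes from a quaternion central simple algebra defined over $\Q$, i.e. $\Gamma_2$ is commensurable with $\SL_1(\mathcal{O}_D)$ where $D$ is a quaternion algebra over $\Q$, or with $\SL_2(\Z)$. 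First I would rule out the division-algebra case: if $D$ were a division algebra over $\Q$ split over $\R$, then $\Gamma_2$ would be cocompact, hence $\Gamma$ would be a cocompact lattice (the $\R^n$ direction is already compact modulo $\Gamma_1$), and then Proposition \ref{ccpt=def|Q} would force the irreducible representation $\varphi$ of $\SL_2(\R) = \mathbf{SL}_{1,D}(\R)$ on $\R^n$ to be defined over $\Q$. But by Lemma \ref{on R^2n not def|Q}, for even $n$ the irreducible representation of $\mathbf{SL}_{1,D}(\R)$ on $\R^n$ is \emph{not} defined over $\Q$ — a contradiction. Hence the $\Q$-form must be the split one, and $\Gamma_2$ is commensurable with $\SL_2(\Z)$.

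Once $\Gamma_2$ is (up to commensurability and conjugacy) $\SL_2(\Z)$, the next step is to pin down the $\Q$-structure on $\R^n$. Since $\Gamma_2$ is Zariski dense in $\SL_2(\R)$ (Borel density) and stabilizes the lattice $\Gamma_1$, the same argument as in Proposition \ref{arithmetic-1} shows that conjugating by some $a \in \GL_n(\R)$ carries $\Gamma_1$ to $\Z^n$ with its standard $\Q$-structure, and $\varphi(\Gamma_2)$ into $\SL_n(\Z)$ for that structure. The image $\varphi(\SL_2(\R))$ is then the real points of a $\Q$-form of $\mathbf{SL}_2$ inside $\mathbf{SL}_n$ preserving this $\Q$-structure; since the only $\Q$-form of $\SL_2(\R)$ admitting a non-cocompact lattice is $\mathbf{SL}_2$ itself, the representation $\varphi$ is defined over $\Q$, with $\Z^n$ a $\Gamma_2$-stable lattice. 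This identifies, up to conjugacy, the pair $(\R^n, \Gamma_1)$ with the standard $(\R^n, \Z^n)$ and $\Gamma_2$ with a finite-index subgroup of $\SL_2(\Z)$ acting via the (unique, over $\Q$) irreducible $n$-dimensional representation, namely $\mathrm{Sym}^{n-1}$.

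It then remains to reassemble $\Gamma$ from $\Gamma_1$ and $\Gamma_2$ and check it is commensurable with $\Z^n \rtimes \SL_2(\Z)$. The short exact sequence $1 \to \Gamma_1 \to \Gamma \to \Gamma_2 \to 1$ is a lattice extension; after passing to a finite-index subgroup we may assume $\Gamma_1 = \Z^n$ and $\Gamma_2$ is a finite-index subgroup of $\SL_2(\Z)$. The extension is classified by a class in $H^2(\Gamma_2, \Z^n)$; I would argue that this class becomes trivial after restricting to a further finite-index subgroup of $\Gamma_2$ — e.g. because $H^2(\Gamma_2, \Q^n) = H^2(\Gamma_2, \Z^n) \otimes \Q$ vanishes for a suitable finite-index subgroup (congruence subgroups of $\SL_2(\Z)$ acting on $\mathrm{Sym}^{n-1}$ have the relevant cohomology controlled, and one can clear denominators / use a transfer argument), so that up to finite index the extension splits and $\Gamma$ contains, with finite index, a conjugate of $\Z^n \rtimes \Gamma_2'$ for some finite-index $\Gamma_2' \leq \SL_2(\Z)$. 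Since $\Z^n \rtimes \Gamma_2'$ is itself of finite index in $\Z^n \rtimes \SL_2(\Z)$, this gives the claimed commensurability, and conjugating back by $a$ completes the argument.

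I expect the main obstacle to be the last step — controlling the extension class and showing the extension splits virtually. The earlier parts are essentially bookkeeping with the already-proved propositions, but one must be careful that the cocompactness dichotomy is used correctly (a cocompact $\Gamma_2$ forces $\varphi$ over $\Q$, which is impossible for even $n$) and that "up to conjugates and commensurability" is tracked consistently through the reduction to $\Z^n$ and the finite-index passages. A secondary subtlety is ensuring the conjugating element $a$ can be chosen so that the whole group $\Gamma$, not just $\Gamma_1$ and $\varphi(\Gamma_2)$ separately, lands inside $\Z^n \rtimes \SL_2(\Z)$ up to finite index; this is where the vanishing of the (rationalized) degree-two cohomology is doing the real work.
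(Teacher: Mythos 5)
Your reduction steps match the paper's: you identify $\Gamma_1$ and $\Gamma_2$ as lattices, rule out the cocompact/division-algebra case for $\Gamma_2$ by combining Proposition \ref{ccpt=def|Q} with Lemma \ref{on R^2n not def|Q} (cocompact $\Gamma_2$ would make $\Gamma$ cocompact and force $\varphi$ to be defined over $\Q$, impossible for even $n$), conclude that $\Gamma_2$ is commensurable with $\SL_2(\Z)$, and then try to split the extension $1\to\Gamma_1\to\Gamma\to\Gamma_2\to 1$ after passing to a finite-index subgroup. This is exactly the paper's route up to the last step.

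The genuine gap is in the mechanism you propose for the virtual splitting. You argue that $H^2(\Gamma_2,\Q^n)$ vanishes, so the extension class in $H^2(\Gamma_2,\Z^n)$ is torsion, and then invoke ``clearing denominators / transfer'' to kill it on a finite-index subgroup. Transfer does not do this: for $\Gamma'\leq\Gamma_2$ of index $d$, the identity $\mathrm{cor}\circ\mathrm{res}=d$ only shows that a class killed by restriction is annihilated by $d$; it does not show that a torsion class dies under restriction to some finite-index subgroup. As written, the step ``the class becomes trivial after restricting to a further finite-index subgroup'' is asserted rather than proved, and you yourself flag it as the main obstacle. The paper closes this gap with a one-line observation that makes the cohomology disappear entirely: $\SL_2(\Z)$, hence $\Gamma_2$, is virtually free, so one chooses a \emph{free} finite-index subgroup $\Lambda\leq\Gamma_2$; since free groups have cohomological dimension one, $H^2(\Lambda,M)=0$ for \emph{every} module $M$ (no rationalization needed), and the pulled-back extension $0\to\Gamma_1\to\pi^{-1}(\Lambda)\to\Lambda\to 1$ splits outright, giving $\pi^{-1}(\Lambda)\cong\Gamma_1\rtimes\Lambda$ as a finite-index subgroup of $\Gamma$. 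Your secondary worry about choosing the conjugating element so that all of $\Gamma$ (not just $\Gamma_1$ and $\varphi(\Gamma_2)$ separately) lands in $\Z^n\rtimes\SL_2(\Z)$ is then resolved by the same splitting: once $\pi^{-1}(\Lambda)$ is an honest semidirect product $\Gamma_1\rtimes\Lambda$, conjugating $\Gamma_1$ to $\Z^n$ carries it into $\Z^n\rtimes\SL_2(\Z)$ as a finite-index subgroup. Replace your transfer argument by the virtual-freeness observation and the proof is complete.
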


\begin{proof}
    Let $\pi:\R^n\rtimes\SL_2(\R)\rightarrow\SL_2(\R)$ denote the projection map. Then, for a lattice $\Gamma\subset\R^n\rtimes\SL_2(\R)$, $\Gamma_1=\Gamma\cap{\R^{n}}$ and $\Gamma_2= \pi(\Gamma)$ are lattices in $\R^{n}$ and $\SL_2(\R)$, respectively. We have the exact sequence of lattices
    $$0\longrightarrow\Gamma_1\longrightarrow\Gamma\overset{\pi}\longrightarrow\Gamma_2\longrightarrow{1}.$$

    \noindent
    When $n$ is even, by Lemma \ref{on R^2n not def|Q} and the notation therein, the representation of $\mathbf{SL}_{1,D}(\R)$ on $\R^n$ is not defined over $\Q$. By Theorem \ref{ccpt=n odd} this implies that $\Gamma_2$ is a {\it non-cocompact} arithmetic lattice in $\SL_2(\R)$ and hence, is the group of $\Z$-points of the split central simple algebra $\M_2(\Q)$ over $\Q$ (see Section \ref{notation}). Therefore, $\Gamma_2$ is commensurable with $\SL_2(\Z)$, and thus, contains a free subgroup $\Lambda$ having a finite index in $\SL_2(\Z)$. Indeed, for any free subgroup $\Tilde{\Lambda}$ of finite index in $\SL_2(\Z)$, $\Lambda=\Gamma_2\cap\Tilde\Lambda$ is a free subgroup of finite index in $\Gamma_2\cap\SL_2(\Z)$.
    The inverse image $\pi^{-1}(\Lambda)$ of $\Lambda$, contained inside $\Gamma$, gives the exact sequence 
    \begin{equation}\label{inverseImgLambda}
    0\longrightarrow\Gamma_1\longrightarrow{\pi^{-1}(\Lambda)}\overset{\pi}\longrightarrow\Lambda\longrightarrow{1}.
    \end{equation}
     Now, it can easily be seen that the map $\pi^*:\Gamma/\pi^{-1}(\Lambda)\longrightarrow\Gamma_2/\Lambda$, defined as $\pi^*([\gamma])\colonequals[\pi(\gamma)]$ for all $\gamma\in\Gamma$, is a bijection. Therefore $\pi^{-1}(\Lambda)$ is a subgroup of finite index in $\Gamma$ and hence, a lattice in $\R^{n}\rtimes\SL_2(\R)$.
   
    \noindent
    Since $\Lambda$ is a free group, the sequence \eqref{inverseImgLambda} splits, giving $\pi^{-1}(\Lambda)\cong\Gamma_1\rtimes\Lambda$. Up to conjugates and commensurability, every lattice in $\R^{n}$ is of the form $\Z^n$. Thus, $\Gamma\cong\Z^{n}\rtimes\SL_2(\Z)$ up to conjugation and commensurability.
\end{proof}

\subsection{Proof of Theorem \ref{secondmaintheorem}}
We now proceed to consider a non-trivial and not necessarily irreducible representation $\varphi$ of $\SL_2(\R)$ on $\R^n$. We continue with the notation from the previous sections. If the lattice $\Gamma\subset \R^n\rtimes_\varphi\SL_2(\R)$ is cocompact, then the lattice $\Gamma_2$ is cocompact in $\SL_2(\R)$. By Proposition \ref{D_definedover_Q}, $\Gamma_2=\SL_1(\mathcal O_D)$ for a quaternion division algebra $D$ defined over $\Q$ and split over $\R$. Then by Proposition \ref{ccpt=def|Q}, the representation of $\mathbf{SL}_{1,D}(\R)$ on $\R^n$ is defined over $\Q$. A converse to this statement is true with a certain restriction on the multiplicities of the irreducible subrepresentations of even dimension occurring in $\R^n$.
\begin{proposition}\label{for reducible action def|Q=even multiplicity}
A representation of $\mathbf{SL}_{1,D}(\R)$ on $\R^n$ is defined over $\Q$ if and only if each irreducible subrepresentation of even dimension in $\R^n$ has even multiplicity.
\end{proposition}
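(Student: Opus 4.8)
The plan is to reduce the statement to the irreducible case already settled in Proposition \ref{def|k} (equivalently Lemma \ref{on R^2n not def|Q}), using the multiplicity-sensitive descent machinery of Proposition \ref{direct summand def over k}. Write the given representation, after complexifying/passing to $\bar{\Q}$, as a direct sum of isotypic components $\R^n\otimes\bar{\Q}\simeq\bigoplus_{j}(\mathrm{Sym}^{d_j}\rho)^{\oplus m_j}$, where the $d_j$ are distinct. The key observation is that the trivial representation and the odd-dimensional irreducibles $\mathrm{Sym}^{2i}\rho$ are individually defined over $\Q$ (Proposition \ref{def|k}), while the even-dimensional ones $\mathrm{Sym}^{2i+1}\rho$ are not; however, a \emph{pair} $\mathrm{Sym}^{2i+1}\rho\oplus\mathrm{Sym}^{2i+1}\rho$ can be defined over $\Q$, because $D^{0}$-type arguments or, more cleanly, the existence of a $\Q$-form on $D\simeq (\bar{\Q}^2)^{\oplus 2}$ itself (the left regular representation of $\mathbf{SL}_{1,D}$ on $D$, which \emph{is} defined over $\Q$) realizes $\rho^{\oplus 2}$ over $\Q$; tensoring this $\Q$-rational module with the $\Q$-rational $\mathrm{Sym}^{2i}\rho$ and invoking the Clebsch–Gordan decomposition \eqref{CGFormula} together with Proposition \ref{direct summand def over k} shows $(\mathrm{Sym}^{2i+1}\rho)^{\oplus 2}$ is defined over $\Q$ (after stripping off the $\Q$-rational lower pieces).

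For the ``if'' direction, assume every even-dimensional irreducible constituent occurs with even multiplicity. Then $\R^n\otimes\bar\Q$ decomposes as a sum of $\Q$-rational blocks: each odd-dimensional isotypic component $(\mathrm{Sym}^{2i}\rho)^{\oplus m}$ is $\Q$-rational since $\mathrm{Sym}^{2i}\rho$ is; and each even-dimensional isotypic component $(\mathrm{Sym}^{2i+1}\rho)^{\oplus 2m_j}$ is a direct sum of $m_j$ copies of the $\Q$-rational module $(\mathrm{Sym}^{2i+1}\rho)^{\oplus 2}$. A direct sum of representations defined over $\Q$ is defined over $\Q$ (take the direct sum of the $\Q$-structures), so $\varphi$ is defined over $\Q$.

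For the ``only if'' direction, suppose $\varphi$ is defined over $\Q$ but some even-dimensional irreducible $\mathrm{Sym}^{2i+1}\rho$ occurs with odd multiplicity $m$. Here is where Proposition \ref{direct summand def over k} must be upgraded: its hypothesis requires multiplicity one, so I instead invoke the isotypic version stated in Appendix B (referenced in the remark preceding Proposition \ref{direct summand def over k}), which guarantees that each \emph{isotypic component} $V_{(i)}:=(\mathrm{Sym}^{2i+1}\rho)^{\oplus m}\subset\R^n\otimes\bar\Q$ is itself defined over $\Q$ — that is, there is a $\Q$-subspace $W$ of $\R^n$, stable under $\Gamma_2$ (hence under $\mathbf{SL}_{1,D}$ as a $\Q$-group by Zariski density), with $W\otimes\bar\Q=V_{(i)}$. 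Now $W$ is a $\Q$-rational representation isomorphic over $\bar\Q$ to $(\mathrm{Sym}^{2i+1}\rho)^{\oplus m}$; its endomorphism algebra over $\bar\Q$ is $\M_m(\bar\Q)$, and the obstruction to further splitting $W$ over $\Q$ lies in a Brauer class. The point is that the $\Q$-form of $\mathbf{SL}_{1,D}$ forces this endomorphism algebra, as a $\Q$-algebra, to contain (a matrix algebra over) $D$ itself — essentially because the minimal $\Q$-rational realization of $\mathrm{Sym}^{2i+1}\rho$ is $D$-isotypic with the quaternionic structure, so the $\Q$-algebra is $\M_{m/2}(D)$ when $m$ is even and cannot be an integral matrix algebra of odd ``$D$-rank''. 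Since $D$ is a division algebra, $\M_r(D)$ is never Morita-equivalent to $\M_s(\Q)$, and in particular $(\mathrm{Sym}^{2i+1}\rho)^{\oplus m}$ with $m$ odd cannot be defined over $\Q$: pairing up $m-1$ of the copies into $\Q$-rational blocks would leave a single $\mathrm{Sym}^{2i+1}\rho$ defined over $\Q$, contradicting Lemma \ref{on R^2n not def|Q} / Proposition \ref{def|k}. This last parity-of-$D$-rank argument — making precise why an odd number of quaternionic copies obstructs $\Q$-rationality — is the main obstacle, and I would carry it out either via the explicit $D$-module structure on the $\Q$-form (so that the number of copies is literally a $D$-dimension, forced even) or via Galois descent: the Galois twists $f_\sigma$ permute the $m$ irreducible $\bar\Q$-summands within the isotypic block through a $1$-cocycle valued in $\mathrm{PGL}_m$, and the class of $D$ in $H^2$ pulls back to give a nontrivial obstruction exactly when $m$ is odd.
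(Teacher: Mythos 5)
Your ``if'' direction is essentially the paper's argument: the left regular representation of $\mathbf{SL}_{1,D}$ on $D$ gives a $\Q$-form of $\rho^{\oplus 2}$, and tensoring with the $\Q$-rational $\mathrm{Sym}^{2i}\rho$ and splitting off isotypic pieces via Clebsch--Gordan yields $(\mathrm{Sym}^{2i+1}\rho)^{\oplus 2}$ over $\Q$. (One quibble: the splitting-off step needs the isotypic version of the descent result from Appendix~B rather than Proposition~\ref{direct summand def over k} itself, since the constituents there occur with multiplicity two; the paper is equally terse on this point.) That half is fine.

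The ``only if'' direction has a genuine gap, and it sits exactly where you say it does. After invoking Appendix~B to make each isotypic component $V_{(i)}=(\mathrm{Sym}^{2i+1}\rho)^{\oplus m}$ rational over $\Q$, you assert that ``pairing up $m-1$ of the copies into $\Q$-rational blocks would leave a single $\mathrm{Sym}^{2i+1}\rho$ defined over $\Q$.'' This is the statement to be proved, not a step: the $\Q$-structure on $V_{(i)}$ need not restrict to a $\Q$-structure on any chosen sum of $m-1$ of the $\bar\Q$-irreducible summands, so nothing is ``left over.'' What one actually knows is that the $\Q$-form $W$ decomposes into $\Q$-irreducibles $S_j$ with $S_j\otimes\bar\Q\cong(\mathrm{Sym}^{2i+1}\rho)^{\oplus d_j}$ and $d_j\geq 2$ (by Lemma~\ref{on R^2n not def|Q}); but ruling out odd $d_j\geq 3$ requires identifying $\mathrm{End}_{G}(S_j)$ as a central division algebra over $\Q$ of degree $d_j$ that is Brauer-equivalent to $D$ (a Tits-algebra computation), whence $d_j=2$ and $m=\sum d_j$ is even. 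You gesture at both this route and a $1$-cocycle/$H^2$ argument but carry out neither, so the necessity half is not established. Note that the paper avoids all of this: it splits off the maximal $\Q$-rational part consisting of paired even-dimensional constituents and the odd-dimensional ones, observes that the leftover is a multiplicity-free sum of even-dimensional irreducibles which (being a sum of full isotypic components of the remaining module) is itself defined over $\Q$, and then applies Proposition~\ref{direct summand def over k} and Lemma~\ref{on R^2n not def|Q} to each leftover summand to reach a contradiction --- no Brauer-group input is needed. If you want to keep your route, you must actually prove the degree-two statement for the Schur division algebra; otherwise, revert to the elementary complementation argument.
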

\begin{proof}
To prove the sufficiency assertion, we need only show that for any even integer $m$, the representation of $\mathbf{SL}_{1,D}(\R)$ on $\mathrm{Sym}^{m-1}(\R^2)\oplus\mathrm{Sym}^{m-1}(\R^2)$, is defined over $\Q$.  By Proposition \ref{ccpt=def|Q} the representation of $\mathbf{SL}_{1,D}(\R)$ on $D\otimes_{\Q}\R\cong\M_2(\R)$ is defined over $\Q$. The discussion towards the end of Section \ref{lattice-in-SL2} shows that $\M_2(\R)$ decomposes into $\R^2\oplus\R^2$ as an $\mathbf{SL}_{1,D}(\R)$-module. Thus, the representation of $\mathbf{SL}_{1,D}(\R)$ on $\R^2\oplus\R^2\cong\mathrm{Sym}^{1}(\R^2)\oplus\mathrm{Sym}^{1}(\R^2)$ is defined over $\Q$. 

\noindent
By Lemma \ref{on R^2n not def|Q} the representation of $\mathbf{SL}_{1,D}(\R)$ on $\R^3\cong\rm{Sym}^{2}(\R^2)$ is defined over $\Q$. Following the isomorphism of $\mathbf{SL}_{1,D}(\R)$-modules using the Clebsch-Gordon formula in \eqref{CGFormula},
$$\underbrace{(\rm{Sym}^{1}(\R^2)\oplus\rm{Sym}^{1}(\R^2))\otimes_{\Q}\rm{Sym}^{2}(\R^2)}_{\mbox{defined over }\Q}\cong\underbrace{\rm{Sym}^{3}(\R^2)\oplus\rm{Sym}^{3}(\R^2)}\oplus\underbrace{\rm{Sym}^{1}(\R^2)\oplus\rm{Sym}^{1}(\R^2)}_{\mbox{defined over }\Q},$$ 
the representation of $\mathbf{SL}_{1,D}(\R)$ on $\mathrm{Sym}^{3}(\R^2)\oplus\mathrm{Sym}^{3}(\R^2)\cong\R^4\oplus\R^4$ is defined over $\Q$.
Now, use induction on the dimension $m$ to see that the representation of $\mathbf{SL}_{1,D}(\R)$ on $\R^m\oplus\R^m\cong\rm{Sym}^{m-1}(\R^2)\oplus\rm{Sym}^{m-1}(\R^2)$ is defined over $\Q$ when $m$ is an even integer. 

\noindent
For the necessary assertion, assume that the representation of $\mathbf{SL}_{1,D}(\R)$ on $\R^n$ be defined over $\Q$, and let 
$$\R^n\cong\underbrace{(\oplus_{i=1}^{s_1}(\mathrm{Sym}^{2l_i-1}(\R^2)\oplus\mathrm{Sym}^{2l_i-1}(\R^2)))}_{\mbox{defined over }\Q}\oplus\underbrace{(\oplus_{i=1}^{s_2}\mathrm{Sym}^{2m_i}(\R^2))}_{\mbox{defined over }\Q}\oplus\ (\oplus_{i=1}^{s_3}\mathrm{Sym}^{2n_i-1}(\R^2))$$ be the decomposition of $\R^n$ into irreducible $\mathbf{SL}_{1,D}(\R)$-submodules, where $n_i\neq{n_j}$ for $i\neq{j}$. From the arguments in the `if' part above and Lemma \ref{on R^2n not def|Q}, the terms in the first two parenthesis are defined over $\Q$. Assuming the $\mathbf{SL}_{1,D}(\R)$-submodule $\oplus_{i=1}^{s_3}\mathrm{Sym}^{2n_i-1}(\R^2)$ is non-zero, we get that the representation of $\mathbf{SL}_{1,D}(\R)$ on $\oplus_{i=1}^{s_3}\mathrm{Sym}^{2n_i-1}(\R^2)$ is defined over $\Q$. Hence, by Proposition \ref{direct summand def over k}, we see that the representation of $\mathbf{SL}_{1,D}(\R)$ on $\mathrm{Sym}^{2n_i-1}(\R^2)$, for each $i$, is defined over $\Q$. This is a contradiction to Lemma \ref{on R^2n not def|Q}. Thus, $\oplus_{i=1}^{s_3}\mathrm{Sym}^{2n_i-1}(\R^2)$ is trivial. That is, each irreducible representation of even dimension which occurs in $\R^n$ has even multiplicity.
\end{proof}

\begin{proposition}\label{for reducible action def|Q=ccpt}
If the representation of $\mathbf{SL}_{1,D}(\R)$ on $\R^n$ is defined over $\Q$, then  $\R^n\rtimes_\varphi\SL_2(\R)$ contains a cocompact lattice.
\end{proposition}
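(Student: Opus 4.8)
The plan is to mimic the argument in Lemma~\ref{def|Q=ccpt}, which handled the irreducible case, but now feed it the $\Q$-rational structure supplied by the hypothesis. Concretely, since the representation $\varphi$ of $\mathbf{SL}_{1,D}(\R)=\SL_2(\R)$ on $\R^n$ is defined over $\Q$, there is a $\Q$-structure on $\R^n$ — say a $\Q$-subspace $W$ with $W\otimes_\Q\R\cong\R^n$ — stable under $\mathbf{SL}_{1,D}(\Q)=\SL_1(D)$, and the corresponding representation $\varphi\colon\mathbf{SL}_{1,D}\to\mathbf{GL}_W$ is a morphism of algebraic groups defined over $\Q$. I would then invoke \cite[Lemma~4.1]{P-R} exactly as in the proof of Lemma~\ref{def|Q=ccpt}: because $\varphi$ is defined over $\Q$ and $\SL_1(\mathcal O_D)$ is an arithmetic subgroup of $\mathbf{SL}_{1,D}(\Q)$, there is a finite-index subgroup $H\subseteq\SL_1(\mathcal O_D)$ and an $\R$-basis of $\R^n$ with respect to which $\varphi(H)\subseteq\SL_n(\Z)$; equivalently, $H$ stabilizes an arithmetic lattice $\Gamma_1\cong\Z^n$ in $\R^n$.

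Granting this, form the semidirect product $\Gamma\colonequals\Gamma_1\rtimes_\varphi H$ inside $\R^n\rtimes_\varphi\SL_2(\R)$. It is discrete (it is contained in $\mathbf{GL}_n(\Z)\ltimes\Z^n$ after the change of basis) and it has finite covolume, since $\Gamma_1$ is cocompact in $\R^n$ and $H$, being of finite index in $\SL_1(\mathcal O_D)$, is a cocompact lattice in $\SL_2(\R)$ (here $D$ is a quaternion division algebra over $\Q$ split over $\R$, so $\SL_1(\mathcal O_D)$ is cocompact in $\SL_2(\R)$ by Godement's criterion / \cite[Prop.~6.2.6]{DWM}). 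A short computation with Haar measure on the semidirect product — or the standard fact that an extension of a cocompact lattice by a cocompact lattice is a cocompact lattice — shows that $\Gamma$ is a cocompact lattice in $\R^n\rtimes_\varphi\SL_2(\R)$.

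I expect the only genuine point requiring care to be the verification that \cite[Lemma~4.1]{P-R} applies verbatim here: one needs that $\mathbf{SL}_{1,D}$ is the $\Q$-algebraic group whose integral points (in a faithful $\Q$-embedding) are commensurable with $\SL_1(\mathcal O_D)$, that $\varphi$ is a $\Q$-morphism between $\Q$-groups, and hence that $\varphi$ carries a suitable congruence subgroup into $\SL_n(\Z)$; all of this is already implicit in the setup of Section~\ref{lattice-in-SL2} and was used in Lemma~\ref{def|Q=ccpt}, so essentially no new work is needed. The remaining steps — discreteness, finite covolume, cocompactness of the solvable-by-semisimple extension — are routine once the lattice $\Gamma_1\rtimes H$ has been exhibited. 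Thus the proof reduces to two sentences citing Lemma~\ref{def|Q=ccpt}'s mechanism and then observing cocompactness follows from the cocompactness of $\Gamma_1$ in $\R^n$ together with that of $\SL_1(\mathcal O_D)$ in $\SL_2(\R)$.
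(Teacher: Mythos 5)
Your proposal is correct and is essentially the paper's own argument: the paper's entire proof is the observation that the proof of Lemma \ref{def|Q=ccpt} (via \cite[Lemma 4.1]{P-R} producing a finite-index subgroup $H\subseteq\SL_1(\mathcal O_D)$ stabilizing a lattice $\Gamma_1\cong\Z^n$, hence a cocompact lattice $\Gamma_1\rtimes H$) nowhere uses irreducibility of $\varphi$. You have simply spelled out the routine verifications that the paper leaves implicit.
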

\begin{proof}
Proof of Lemma \ref{def|Q=ccpt} does not depend on the irreducibility of the representation of $\mathbf{SL}_{1,D}(\R)$ on $\R^n$.
\end{proof}
\begin{proof}[{\bf Proof of Theorem \ref{secondmaintheorem}:}] 
Proposition \ref{ccpt=def|Q} states that, if $\R^n\rtimes_\varphi\SL_2(\R)$ contains a cocompact lattice, then $\varphi$ is defined over $\Q$. The converse assertion follows from Propositions  \ref{for reducible action def|Q=even multiplicity} and \ref{for reducible action def|Q=ccpt}.
\end{proof}

\noindent
Although the cocompact lattices are of primary interest in this article, we observe that for any $n\in\N\cup\{0\}$, and any representation $\varphi$ of $\SL_2(\R)$ on $\R^n$, $\R^n\rtimes_\varphi\SL_2(\R)\cong(\oplus_{i=1}^{r}\R^{k_i})\rtimes\SL_2(\R)$ contains a non-cocompact lattice of the form $(\oplus_{i=1}^{r}\Z^{k_i})\rtimes\SL_2(\Z)$.

\appendix
\section{Comments on Corollary 8.28 in \cite{Rag-1}}
\noindent
Readers familiar with the results in \cite[Chapter 8]{Rag-1} may understand that the Corollary 8.25 therein is false, with a counter example to the same provided in \cite{TSW}, and this invalidates the proof of Corollary 8.28. However, in \cite[Prop. 1.3]{TSW}, a proof independent of Corollary 8.25 is provided and a correction in the statement of Corollary 8.28 is effected by assuming the maximal connected closed nilpotent subgroup $N$ to be normal in $G$. We record the corrected result here for reference.
\begin{theorem}[Proposition 1.3, \cite{TSW}]
    Let $G$ be a connected Lie group, $\Gamma\subset G$ be a lattice. Let $R$ be the radical of $G$ and $N$ the maximum connected closed nilpotent normal subgroup of $G$. Let $S$ be a semisimple subgroup of $G$ such that $G=S\cdot R$. Let $\sigma$ denote the action of $S$ on $R$. Assume that the kernel of $\sigma$ has no compact factor in its identity component. Let $\pi: G\rightarrow G/R$ and $\pi':G\rightarrow G/N$ be the natural maps. Then $R/(R\cap \Gamma)$ and $N/(N\cap\Gamma)$ are both compact. Moreover $\pi(\Gamma)$ and $\pi'(\Gamma)$ are lattices in $G/R$ and $G/N$ respectively.
\end{theorem}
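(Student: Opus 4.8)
The plan is to reduce the whole statement to the single assertion that $\Gamma\cap R$ is a lattice in $R$, and then to invoke the corrected version of Raghunathan's theorem for that assertion. Granting it: $R$ is solvable, so $\Gamma\cap R$ is automatically cocompact in $R$ by Mostow's theorem \cite[Thm.~3.1]{Rag-1}, hence $R/(R\cap\Gamma)$ is compact. Moreover the nilradical $N$ of $G$ is also the nilradical of the solvable group $R$ (immediate from the standard fact that $[\mathfrak g,\mathfrak r]$ is contained in the nilradical of $\mathfrak g$), hence a characteristic closed normal subgroup of $R$, so Mostow's structure theory for lattices in connected solvable Lie groups \cite[Ch.~3]{Rag-1} shows that $(\Gamma\cap R)\cap N=\Gamma\cap N$ is a cocompact lattice in $N$; thus $N/(N\cap\Gamma)$ is compact as well. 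This settles the two compactness claims.

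Next I would derive the two lattice claims. A standard argument shows that whenever $H\trianglelefteq G$ is closed with $\Gamma\cap H$ \emph{cocompact} in $H$, the product $\Gamma H$ is closed in $G$: if not, there is a sequence $\gamma_iH\to H$ with $\gamma_i\notin H$; lift it to $\gamma_ih_i\to e$ with $h_i\in H$, write $h_i=\lambda_ic_i$ with $\lambda_i\in\Gamma\cap H$ and $c_i$ in a fixed compact subset of $H$, and pass to a subsequence with $c_i\to c$; then $\gamma_i\lambda_i=(\gamma_ih_i)c_i^{-1}\to c^{-1}$ lies in the discrete group $\Gamma$, hence equals a fixed element $d\in\Gamma$ for large $i$, and since $d=c^{-1}\in H$ we get $\gamma_i=d\lambda_i^{-1}\in H$ for large $i$, a contradiction. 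Applying this with $H=N$ and $H=R$ shows $\pi'(\Gamma)$ and $\pi(\Gamma)$ are discrete. The fibration lemma for lattices (see \cite[Ch.~1]{Rag-1})---for a closed normal $H$ with $\Gamma H$ closed, $\Gamma$ is a lattice in $G$ iff $\Gamma\cap H$ is a lattice in $H$ and $\Gamma H/H$ is a lattice in $G/H$, which is a Fubini argument along the fibration $H/(\Gamma\cap H)\to G/\Gamma\to(G/H)/(\Gamma H/H)$---then yields, using the cocompactness already proved, that $\pi(\Gamma)$ and $\pi'(\Gamma)$ are lattices in $G/R$ and $G/N$.

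It remains to prove the key assertion: $\Gamma\cap R$ is a lattice in $R$. This is (the corrected form of) \cite[Cor.~8.28]{Rag-1}, whose original proof relied on the erroneous \cite[Cor.~8.25]{Rag-1}; I would follow the corrected argument of \cite[Prop.~1.3]{TSW}. The hypothesis on $\sigma$ enters exactly here. First, $\ker\sigma=C_S(R)$ is a normal semisimple subgroup of $G$ commuting with $R$ and, at the Lie algebra level, is a sum of simple ideals of $\mathfrak g$; consequently the assumption that $(\ker\sigma)^\circ$ has no compact factor is equivalent to $G$ having no nontrivial connected compact semisimple normal subgroup---equivalently, every compact simple factor of the Levi subgroup $S$ acts on $R$ nontrivially, hence with finite kernel. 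This is precisely what forbids $\Gamma$ from winding densely around such a subgroup; without it the conclusion genuinely fails, for instance when $G=G'\times K$ with $K$ compact semisimple and $\Gamma$ is the graph of a suitable homomorphism from a cocompact lattice of $G'$ into $K$ that is injective on the radical part. The positive argument analyses the Zariski closure $\mathcal{H}$ of $\mathrm{Ad}_G(\Gamma)$ in $\Aut(\mathfrak g)$: Zariski density of lattices (Borel) forces $\mathcal{H}$ to surject onto $\mathrm{Ad}$ of the semisimple quotient $G/R$, while an Auslander-type result on algebraic hulls of lattices bounds the semisimple part of $\mathcal{H}^\circ$ in terms of how $S$ acts on $R$; under the no-compact-factor hypothesis these are compatible only if $\mathrm{Ad}(\Gamma)\cap\mathrm{Ad}(R)$ is Zariski dense in $\mathrm{Ad}(R)$, and from this one extracts, using normality of $R$ and finiteness of the covolume of $\Gamma$, that $\Gamma\cap R$ is a lattice in $R$.

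The main obstacle is exactly this last step: it is the technical core, requiring the Borel density and Auslander machinery rather than soft topology, and one must also cope with the fact that $N$ and $R$ need not be simply connected (they may contain central tori), so the algebraic-hull arguments are run on $\mathrm{Ad}(G)$ or on a simply connected cover and then transported back. By contrast, the reductions in the first two paragraphs are formal, using only the structure of the radical and its nilradical, Mostow's theory of solvable lattices, and the standard fibration/Fubini lemma.
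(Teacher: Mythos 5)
Your reductions are sound: granting that $\Gamma\cap R$ is a lattice in $R$, the compactness of $R/(R\cap\Gamma)$ follows from \cite[Thm.~3.1]{Rag-1}, the identification of $N$ with the nilradical of $R$ and Mostow's theorem \cite[Thm.~3.3]{Rag-1} give that $\Gamma\cap N$ is a cocompact lattice in $N$, your closedness argument for $\Gamma H$ is correct, and the fibration lemma then yields the two lattice claims. Be aware, though, that the paper does not prove this statement at all: Appendix~A records it verbatim as \cite[Prop.~1.3]{TSW}, precisely because the original route through \cite[Cor.~8.25 and 8.28]{Rag-1} is broken, so there is no internal proof to match your argument against.

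The genuine gap is that the one assertion you reduce everything to --- $\Gamma\cap R$ is a lattice in $R$ under the hypothesis that $(\ker\sigma)^\circ$ has no compact factor --- is never actually established. Your third paragraph describes in outline what Wu's argument ``would'' do (Borel density applied to the Zariski closure of $\mathrm{Ad}_G(\Gamma)$, an Auslander-type bound on the algebraic hull, and then ``one extracts \ldots that $\Gamma\cap R$ is a lattice in $R$''), but none of these steps is carried out, and the decisive implication from Zariski density of $\mathrm{Ad}(\Gamma)\cap\mathrm{Ad}(R)$ in $\mathrm{Ad}(R)$ to $\Gamma\cap R$ having finite covolume in $R$ is exactly where the no-compact-factor hypothesis must do its work; your own counterexample (the graph of a homomorphism into a compact factor) shows the conclusion is false without it, so this step cannot be waved through. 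As written, the proposal proves only the formal shell of the theorem and outsources its entire content to \cite{TSW}. If citing \cite{TSW} for the full statement is permitted --- as the paper itself does --- your reductions become unnecessary; if it is not, the proof is incomplete at its core.
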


\section{Generalization of proposition \ref{direct summand def over k}}

\noindent
Let $\eta:G\rightarrow \GL(V)$ be an $n$-dimensional semisimple representation of a group $G$ on a $k$-vector space $V$, for a number field $k$. Let $W\subset V\otimes_k\bar{k}$ be an isotypic component of the representation $\eta_{\bar{k}}:G\rightarrow\GL(V\otimes_k\bar{k})$. Then $W$ is defined over $k$.

\begin{proof}
Since $\eta_{\bar{k}}$ is semisimple, $W$ has an orthogonal complement $W'\subset V\otimes_k\bar{k}$. For any $\sigma\in\mathrm{Gal}(\bar{k}/k)$, $ I_n\otimes\sigma$ is an isomorphism of $V\otimes_k\bar{k}$ on itself commuting with $\eta(g)$ for $g\in G$. Thus, we get the following commutative diagram for any $\sigma\in\mathrm{Gal}(\bar{k}/k)$.
\[
\begin{tikzcd}
 W \arrow[hookrightarrow]{r} \arrow[d, "\eta(g)"'] & V\otimes_k\bar{k} \arrow[r] \arrow[d, "\eta(g)"'] & V\otimes_k\bar{k} \arrow[r] \arrow[d, "\eta(g)"'] & W'\arrow[d, "\eta(g)"']\\
 W \arrow[hookrightarrow]{r} & V\otimes_k\bar{k} \arrow[r] & V\otimes_k\bar{k} \arrow[r] & W'.
\end{tikzcd}
\]
Because there cannot be any nonzero maps between $W$ and $W'$ commuting with all $g\in G$, we conclude that the subspaces $W$ and $W'$ are stable under $\mathrm{Gal}(\bar{k}/k)$. \cite[Theorem 2.14]{KC} now gives the existence of the $k$-structures $W_k$ and $W'_k$ respectively of $W$ and $W'$. Thus, $W_k\oplus W'_k$ is a $k$-structure in $V\otimes_k\bar{k}$. But any vector space admits a unique $k$-structure, hence, $V=W_k\oplus W'_k$. Since $\eta$ is defined over $k$, we see that $W$ is defined over $k$.
\end{proof}

\section*{Acknowledgements}
\noindent
We are grateful to T. N. Venkataramana for suggesting this problem and for many useful discussions. We thank Dipendra Prasad for his interest in this project and suggestions. The second-named author thanks SERB for supporting the project via the MATRICS grant (MTR/2021/000368).  The first-named author is supported by a postdoctoral fellowship from the NBHM grant 02018/2/2023-R\&D-II/7839 under Kerala School of Mathematics. We are greatly indebted to the anonymous referees for their invaluable comments to improve the article.

\section*{Conflict of interest}
The authors have no conflicts of interest to declare that are relevant to this article.

\end{document}